\newtheorem{theorem}{Theorem}
\newtheorem{corollary}[theorem]{Corollary}
\newtheorem{lemma}[theorem]{Lemma}
\newtheorem{observation}[theorem]{Observation}
\newtheorem{proposition}[theorem]{Proposition}
\newenvironment{proof}[1][Proof]{\noindent\textbf{#1.} }{\ \rule{0.5em}{0.5em}}
\def\lab(#1)#2{\put(#1){\makebox(0,0)[c]{#2}}}
\begin{document}

\title{\bf Generalised $k$-Steiner Tree Problems in Normed Planes}

\author{Marcus N.~Brazil, Charl J.~Ras, Konrad J.~Swanepoel, Doreen A.~Thomas}

\date{}
\maketitle

\begin{abstract}
The $1$-Steiner tree problem, the problem of constructing a Steiner minimum tree containing at most one Steiner point, has been solved in the
Euclidean plane by Georgakopoulos and Papadimitriou using plane subdivisions called oriented Dirichlet cell partitions. Their algorithm produces
an optimal solution within $O(n^2)$ time. In this paper we generalise their approach in order to solve the $k$-Steiner tree problem,
in which the Steiner minimum tree may contain up to $k$ Steiner points for a given constant $k$. We also extend their approach further to
encompass other normed planes, and to solve a much wider class of problems, including the $k$-bottleneck Steiner tree problem and other
generalised $k$-Steiner tree problems. We show that, for any fixed $k$, such problems can be solved in $O(n^{2k})$ time.

\medskip

\noindent\textit{Keywords:}  $k$-Steiner tree; bottleneck Steiner problem; network optimisation; polynomial time algorithm
\end{abstract}

\section{Introduction}

The \textit{geometric Steiner tree problem}, which asks for a network with minimum total edge length interconnecting a given set of points (called
\emph{terminals}), is a well known variant of the \textit{spanning tree problem}. The Steiner tree problem can be viewed as belonging
to a family of problems where the aim is to construct a network $T$ interconnecting a given set of $n$ terminals, with the following properties:
\begin{enumerate}
    \item $T$ may contain nodes (\textit{Steiner points}) other than the given terminals;
    \item $T$ minimises a given cost function;
    \item the given cost function guarantees that $T$ can be assumed to be a minimum spanning tree on its nodes (for a given metric).
\end{enumerate}

In addition to the Steiner tree problem under various metrics, this family also includes the \textit{power-$p$ Steiner tree problem}, where the
cost of each edge of the network is the $p$-th power of its length, and the \textit{bottleneck Steiner tree problem}, where the cost of the
network is the length of its longest edge and there is a bound on the number of extra nodes in the network. Both of these variants on the Steiner tree problem have numerous applications, particularly in the design of wireless communication networks such as sensor networks.

In contrast to the \textit{graph Steiner problem}, in the geometric versions of the problem the Steiner points may potentially be located at any of an infinite number of points in a given space. This makes the geometric Steiner tree problems, and the methods of finding optimal solutions, fundamentally different from their purely combinatorial analogues. In many versions of the geometric problem it is not even immediately clear how an optimal solution can be calculated.

Although the spanning tree problem is polynomially solvable, being solvable in $O(n^2)$ time in a general metric and in $O(n\log n)$ time in the
Euclidean plane, see \cite{shamos}, the problems in this more general family are mostly $N\! P$-hard, even in the plane. This essentially
stems from the fact that as the number of extra nodes that can be added to the network increases there is an exponential explosion in the number
of different topologies that need to be considered. A natural way of controlling this increase in complexity is to bound the number of extra
nodes, in other words, replace Property~1 above by the following:
\begin{enumerate}
    \item[1a.] $T$ may contain up to $k$ nodes other than the given terminals, where $k$ is a constant positive integer.
\end{enumerate}
We refer to problems in this modified class as  \textit{generalised $k$-Steiner tree problems}. One of the seminal papers on this topic, for the $1$-Steiner tree problem in the Euclidean plane, is a paper by Georgakopoulos and Papadimitriou
\cite{georg}, where an $O(n^2)$-time solution is given. They conclude their paper with a tantalising comment relating to their
unsuccessful attempts at generalising their methodology to the $k$-Steiner tree problem, even for $k=2$. This comment has been a
motivating factor for the current paper. Also, in their paper the proofs of some of the primary results and sufficient details are omitted;
another aim of this paper is to add some rigour to the more fundamental of these results.

It should be noted that solutions to $k$-Steiner problems are fundamentally different to the analogous classical Steiner problems where the number of Steiner points is not bounded. A case in point is the package of exact algorithms called GeoSteiner of Warme, Winter, Zachariasen (see \cite{warme} for one of the companion papers) for constructing optimal Euclidean and rectilinear Steiner trees. For a variety of reasons these algorithms cannot simply be adapted to solve the respective $k$-Steiner problems, whilst maintaining efficiency. A fundamental obstacle in the Euclidean plane is the fact that the degrees of Steiner points can be $4$ when $k$ is bounded, so many of the nice geometric properties utilised in GeoSteiner are lost.

We may restate the goal of the Georgakopoulos and Papadimitriou paper as follows: find the point in the Euclidean plane which, if added to a given set of points, will result in the shortest possible spanning tree. The authors observed that one can significantly reduce the time complexity of an algorithmic
solution to the problem by first constructing a special partition. Given a set $X$ of $n$ terminals in the Euclidean plane, it is possible to
partition the plane into $O(n^2)$ regions such that if any new point $\mathbf{s}$ is embedded in the plane within one of these
regions, say $R$, then any minimum spanning tree $T$ on $X \cup \{\mathbf{s}\}$ will have the following property: the neighbours of $\mathbf{s}$
in $T$ will belong to some subset of a set $C_X(R)$ containing at most six points from $X$, where $C_X(R)$ is fixed for the given region  $R$.
This useful partition is referred to as the \textit{overlayed oriented Dirichlet cell partition}, or OODC partition. Their algorithm takes as
input the set $X$ of terminals and then starts by calculating the OODC partition, the set $C_X(R)$ for every $R$, and a minimum spanning tree
$T^{\,\prime}$ on $X$. All this, as well as a preprocessing step on $T^{\,\prime}$, is done within a time of $O(n^2)$. For each region
$R$, the algorithm then iterates through all subsets $S_0$ of $C_X(R)$ and calculates $\mathbf{s}$, the Steiner point of the nodes in $S_0$ (this
takes constant time for each $S_0$). The algorithm then updates $T^{\,\prime}$ (which can also be done in constant time because of the
preprocessing step) to include $\mathbf{s}$. The cheapest tree is selected at the end as an optimal solution. A naive algorithm for the
$1$-Steiner tree problem would attain a complexity of $O(n^6\cdot\,n\log n)$ since it would have to iterate through \textit{all} subsets
of up to six terminals and then calculate the optimal position of the Steiner point and the corresponding minimum spanning tree for each of
these subsets.

The powerful simplifying properties of the OODC would clearly be advantageous in a generalisation of the algorithm to arbitrary normed planes.
However, the construction of the OODC partition given in \cite{georg} is valid for the Euclidean plane only. We will provide a new method, based
on abstract Voronoi diagrams, for constructing the partition for terminals embedded in an arbitrary normed plane. The construction is based on
theoretical results, but we also define a class of norms for which the algorithm would be practically implementable. Once the partition has been
found, our algorithm calculates the optimal positions of all $k$ Steiner points simultaneously. Of course, these positions will depend not only
on the neighbours of the Steiner points, but also on the cost function of the given generalised Steiner tree problem. Since, at the start of this
step, the neighbours of the Steiner points are fixed but the Steiner points are free, this subproblem is a generalised version of the well-known
\textit{fixed topology Steiner tree problem} (discussed in Section \ref{topology3}). Since $k$ is constant we assume that this step can be done in constant time. A novel method for
updating a minimum spanning tree is then utilised to calculate a potential solution for every choice of coordinates of the Steiner points. Once
again, a cheapest tree is selected as the optimal solution. The total time complexity turns out to be $O(n^{2k})$ when constant factors are excluded.

There are a number of authors who have looked at adapting the solution to the $1$-Steiner tree problem in \cite{georg} to other $\ell_p$ norms.
Kahng and Robins in \cite{kahng} do this for the rectilinear plane, however, their paper only uses the solution as a step in a heuristic
algorithm for the rectilinear Steiner tree problem, and not much attention is devoted to the solution of the $1$-Steiner tree problem itself.
Griffith et al. \cite{griffith} expand on this heuristic idea in the rectilinear plane. They provide a simple procedure (though without
proof) for updating a minimum spanning tree when a new node is introduced. Lin et al. \cite{lin} in turn adapt the approach presented by Kahn and Robins to
the norm induced by a regular hexagon. Recent papers \cite{bae2,bae1} by Bae et al. provide the first exact algorithms for solving the \textit{bottleneck} $k$-Steiner tree problem in the $\ell_p$ planes. The complexity of their algorithms are $O(n\log^2n)$ when $p=1$ and $O(n^k+n\log n)$ for all other finite $p$. However, the methods they use are based on farthest colour Voronoi diagrams and therefore cannot be utilised for any other cost functions. Besides these authors we are not aware of any significant study into the properties and construction of optimal geometric $k$-Steiner trees. Although the classical Steiner tree problem (where the number of Steiner points is not bounded) has been considered in a multitude of norms and under many cost functions, these results are mostly irrelevant to the $k$-Steiner problem.

Section \ref{prelim1} provides some preliminary definitions. Our algorithm for solving the generalised $k$-Steiner tree problem in normed planes has three primary phases. The first phase constructs a set of \textit{feasible internal topologies}. Each feasible internal topology is a forest with leaves only from the set $X$ of terminals, and interior nodes only from the set $S$ of Steiner points. At this stage the nodes of $S$ are not yet located in the plane. By utilising OODC partitions, as discussed in Section \ref{ODC2}, we are able to significantly reduce the total number of feasible internal topologies. In Section \ref{ODC2} we also present three restrictions on the given normed plane that allows the construction of the OODC partition to be implemented in practice. The next phase of our algorithm consists of finding the optimal locations in the plane of the nodes of $S$ for each feasible internal topology. This is known in the literature as the \textit{fixed topology Steiner tree problem}, and its solution depends crucially on the cost function $\alpha$ {and on the given normed plane}. We briefly discuss this phase of the algorithm again in Section \ref{topology3}. The final phase is to add each feasible internal topology (with Steiner points optimally located) to a minimum spanning tree on $X$. The union of these two graphs produces cycles and thus a method is needed for deleting the appropriate edges from the union until an optimal final tree is attained. This so called \textit{minimum spanning tree update} method is the topic of Section \ref{MST4}. In Section \ref{algorithm5} we present our main algorithm and then prove its correctness and verify its time complexity.

\section{Preliminaries}\label{prelim1}

We begin by formalising the definition of a generalised $k$-Steiner tree problem, sketched in the introduction. Throughout this paper we use the symbols $E(G)$ and $V(G)$ for the edge-set and node-set respectively of a graph $G$. We also use the notation
$G=\langle V(G),E(G)\rangle$. Let $k^\prime>0$ be given. Let $\|\cdot\|$ be a given norm on $\mathbb{R}^2$, that is, a function $\|\cdot\|:\mathbb{R}^2\rightarrow\mathbb{R}$ that satisfies $\|\mathbf{x}\|\geq 0$ for all $\mathbf{x}\in\mathbb{R}^2$, $\|\mathbf{x}\|=0$ if and only if $\mathbf{x}=0$, $\|\lambda\mathbf{x}\|=|\lambda\||\mathbf{x}\|$ for $\lambda\in \mathbb{R}$, and $\|\mathbf{x}+\mathbf{y}\|\leq \|\mathbf{x}\|+\|\mathbf{y}\|$ for all $\mathbf{x},\mathbf{y}\in\mathbb{R}^2$. The \textit{unit ball} $B=\{\mathbf{x}:\|\mathbf{x}\|\leq 1\}$ is a centrally symmetric convex set.

Given a set $P=\{ p_1, \ldots , p_{n+k^\prime}\}$,  let  $\{\mathcal{T}\}$ represent the set of all spanning trees for the elements of $P$. For
each $\mathcal{T}$ there is a corresponding set of edges $E(\mathcal{T})= \{ e_1, \ldots , e_{n+k^\prime-1}\}$ (with every $e_i \in P\times P$).
Let $X=\{\mathbf{x}_1, \ldots , \mathbf{x}_{n}\}$, with $\mathbf{x}_i \in \mathbb{R}^2$, represent an embedding of the set $\{ p_1, \ldots , p_{n}\}$ in
$\mathbb{R}^2$ (where $\mathbf{x}_i$ is an embedding of the corresponding $p_i$). We can think of $\mathcal{T}$ as representing the topology of
a tree network interconnecting $X$ and using $k^\prime$ extra nodes, and we can equate the edges $E(\mathcal{T})$ with the arcs of such a
network. For a fixed embedding of this network we let $S=\{\mathbf{x}_{n+1}, \ldots , \mathbf{x}_{n+k^\prime}\}$, with $\mathbf{x}_i \in \mathbb{R}^2$, be the
locations of the extra nodes corresponding to $\{ p_{n+1}, \ldots , p_{n+k^\prime}\}$. We refer to $X$ as the set of \emph{terminals} and $S$ as the set of \emph{Steiner points} of the network. Now let $\mathbf{e}_{\mathcal{T}, X,S}= ( \| e_1\| ,
\ldots , \| e_{n+k^\prime-1}\| )$; i.e., the components of $\mathbf{e}_{\mathcal{T}, X,S}$ are the edge lengths of such a network, for a given
tree topology and a given set of embedded nodes. Such a vector is well defined up to the order of its components.

Let $\alpha:\mathbb{R}^{n+k^\prime-1}_+ \rightarrow \mathbb{R}$ be a symmetric function (i.e., independent of the order of the components of the
vector on which it acts). We think of $\alpha$ as a cost function on a tree network. In other words, $\alpha(\mathbf{e}_{\mathcal{T}, X,S})$ is
the cost of the network with topology $\mathcal{T}$ and nodes $X$ and $S$, and $\displaystyle
\min_{\mathcal{T},S}\alpha(\mathbf{e}_{\mathcal{T}, X,S})$ is the minimum cost (with respect to $\alpha$) of any tree interconnecting the nodes
$X$ and $k^\prime$ other points. Hence, for the power-$p$ Steiner tree problem we define $$\alpha(\mathbf{e}_{\mathcal{T},
X,S})=\alpha_p(\mathbf{e}_{\mathcal{T}, X,S}):= \sum_{i=1}^{n+k^\prime-1}\| e_i\|^p;$$ whereas, for the bottleneck problem (where
the cost of the network is the cost of the longest edge) we have
$$\alpha(\mathbf{e}_{\mathcal{T}, X,S})=\alpha_\infty(\mathbf{e}_{\mathcal{T}, X,S}):= \max_{i=1,\ldots, n+k'-1}\| e_i\|.$$

We say that such a symmetric function $\alpha$ is \emph{$\ell_1$-optimisable} if and only if there  exist $\mathcal{T}^*$ and $S^*$ such that
$\displaystyle \alpha(\mathbf{e}_{\mathcal{T}^*, X,S^*})= \min_{\mathcal{T},S}\alpha(\mathbf{e}_{\mathcal{T}, X,S})$ and $\displaystyle
\alpha_1(\mathbf{e}_{\mathcal{T}^*, X,S^*})= \min_{\mathcal{T}}\alpha_1(\mathbf{e}_{\mathcal{T}, X,S^*})$. In other words, $\alpha$ is
\emph{$\ell_1$-optimisable} if for any given $X$ there exists a tree $T$ interconnecting $X$, with minimum cost with respect to $\alpha$, that
is a minimum spanning tree on its \textit{complete set} of nodes. We denote the cost of such a tree by $\| T \|_\alpha$. It is
easy to show that $\alpha_p$, for $p>0$, and $\alpha_\infty$ are $\ell_1$-optimisable.

\noindent \textbf{Definition.} For any constant positive integer $k$, a \emph{generalised k-Steiner tree problem} is defined to be any problem of
the following form:
\begin{quote}\begin{description}
    \item[\textbf{Given}] A set $X$ of $n$ points in $\mathbb{R}^2$, a norm  $\|\cdot\|$, and a symmetric $\ell_1$-optimisable function
    $\alpha$.
    \item[\textbf{Find}] A set $S$ of $k^\prime\leq k$ points in $\mathbb{R}^2$, and a spanning tree $T$ on $X\cup S$ with topology
    $\mathcal{T}$ such that
    $\| T \|_\alpha =\alpha(\mathbf{e}_{\mathcal{T}, X,S})=
    {\displaystyle \min_{\mathcal{T}^\prime,S^\prime}\alpha(\mathbf{e}_{\mathcal{T}^\prime, X,S^\prime})}$.
\end{description} \end{quote}

We refer to $T$ as a \textit{generalised $k$-Steiner minimum tree}.  The next lemma is an extension of the Swapping
Algorithm, found in \cite{lee}. It follows from the matroid properties of minimum spanning trees.

\begin{lemma}\label{MSTswap}Let $T_0$ be a minimum spanning tree on the terminal set $X$, and let $T_1$ be any spanning tree for $X$.
We can transform $T_1$ to $T_0$ by a series of edge swaps, where each swap involves replacing an edge $e_i \in E(T_1)$ by $e_j
\in E(T_0)$ such that $\|e_i\| \geq \|e_j\|$.
\end{lemma}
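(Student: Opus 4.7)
The plan is to induct on the quantity $d := |E(T_1)\setminus E(T_0)|$. When $d=0$ the trees coincide and there is nothing to swap, so assume $d\geq 1$. The goal of the inductive step is to exhibit a single swap that decreases $d$ by one while replacing a longer edge with a shorter one.

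For the inductive step I would pick any edge $e_j\in E(T_0)\setminus E(T_1)$ and consider the cut $(A,B)$ obtained by deleting $e_j$ from $T_0$. Because $T_0$ is a minimum spanning tree, the standard cut characterisation gives that $e_j$ has minimum length among \emph{all} edges of the ambient complete graph on $X$ that cross $(A,B)$; in particular every edge of $T_1$ across this cut has length at least $\|e_j\|$. Moreover, $T_0\setminus\{e_j\}$ itself contains no edge across $(A,B)$, so no edge of $E(T_0)$ other than $e_j$ crosses the cut.

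To convert this into a legal swap I have to choose the edge $e_i$ to lie on the fundamental cycle $C$ that $e_j$ forms when added to $T_1$, since then $T_1':=T_1-e_i+e_j$ is again a spanning tree. Here a parity observation is the key: $C$ crosses the cut $(A,B)$ an even number of times, and one such crossing is $e_j$, so at least one further edge of $C$, necessarily lying in $E(T_1)$, must cross $(A,B)$. Call this edge $e_i$. By the two observations of the previous paragraph, $\|e_i\|\geq\|e_j\|$ and $e_i\notin E(T_0)$. Hence $T_1'=T_1-e_i+e_j$ is a spanning tree with $|E(T_1')\setminus E(T_0)|=d-1$, and the induction hypothesis applied to $T_1'$ completes the transformation.

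The only nontrivial ingredient is the MST cut property; everything else is bookkeeping. The subtle point, and the reason I select $e_i$ from $C$ rather than arbitrarily across the cut, is the need to simultaneously (i) preserve the spanning-tree property after the swap and (ii) strictly reduce the symmetric difference with $T_0$. The parity argument pins down precisely such an edge, so I do not anticipate any further obstacle.
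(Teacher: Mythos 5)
Your argument is correct and complete. The paper itself does not spell out a proof of Lemma~\ref{MSTswap}: it simply asserts that the statement is an extension of the Swapping Algorithm of \cite{lee} and ``follows from the matroid properties of minimum spanning trees,'' i.e.\ from the basis-exchange property of the graphic matroid together with the optimality characterisation of minimum-weight bases. Your proof is the concrete, self-contained graph-theoretic instantiation of that machinery: you induct on $|E(T_1)\setminus E(T_0)|$, invoke the cut-optimality condition for the fundamental cut of an edge $e_j\in E(T_0)\setminus E(T_1)$ to get $\|e\|\geq\|e_j\|$ for every crossing edge $e$, and use the parity of cycle--cut intersections to locate a crossing edge $e_i$ on the fundamental cycle of $e_j$ in $T_1$, which guarantees both that $T_1-e_i+e_j$ is a tree and that the symmetric difference strictly shrinks (since $e_j$ is the unique $T_0$-edge across that cut, forcing $e_i\notin E(T_0)$). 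What the paper's route buys is brevity and generality (the statement holds in any weighted matroid); what yours buys is that the lemma is verified from first principles, with the only external ingredient being the standard fact that each edge of a minimum spanning tree is a minimum-length edge across the cut it defines. One cosmetic remark: the lemma's phrasing ``$e_i\in E(T_1)$'' should be read as an edge of the \emph{current} intermediate tree, which is exactly how your induction treats it, so there is no gap there.
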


The corollary below shows that $T$ is equivalent in cost to any  minimum spanning tree on $X\cup S$.

{
\begin{corollary}\label{MST-coroll}Every minimum spanning tree on $X\cup S$ is a generalised $k$-Steiner minimum tree on $X$.
\end{corollary}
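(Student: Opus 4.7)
The plan is to combine the $\ell_1$-optimisability hypothesis on $\alpha$ with the matroid-theoretic invariance of edge-length multisets across minimum spanning trees on a common node set. By the $\ell_1$-optimisability of $\alpha$, I first produce an optimal pair $(\mathcal{T}^*, S^*)$ for which the tree $T^*$ with topology $\mathcal{T}^*$ is simultaneously a minimum spanning tree on $X\cup S^*$. Taking this $S^*$ as the Steiner set $S$ appearing in the statement, the corollary reduces to showing that every other minimum spanning tree on $X\cup S$ attains the same value of $\alpha$ as $T^*$.

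The key technical step is to show that any two minimum spanning trees on the same embedded node set have identical multisets of edge lengths. Let $T'$ be any MST on $X\cup S$. Apply Lemma~\ref{MSTswap} with $T_0 := T^*$ and $T_1 := T'$: the lemma transforms $T'$ into $T^*$ by a sequence of edge swaps, each exchanging an edge $e_i\in E(T')$ for an edge $e_j\in E(T^*)$ with $\|e_i\|\geq\|e_j\|$. Since $T'$ and $T^*$ are both MSTs, their total edge lengths coincide; consequently every one of these inequalities must be an equality, and every swap exchanges edges of equal length. The multisets $\{\|e\|:e\in E(T')\}$ and $\{\|e\|:e\in E(T^*)\}$ therefore coincide.

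To finish, I invoke the symmetry of $\alpha$. Letting $\mathcal{T}'$ denote the topology of $T'$, the vectors $\mathbf{e}_{\mathcal{T}', X, S}$ and $\mathbf{e}_{\mathcal{T}^*, X, S}$ are permutations of one another, so
\[
\alpha(\mathbf{e}_{\mathcal{T}', X, S}) \;=\; \alpha(\mathbf{e}_{\mathcal{T}^*, X, S}) \;=\; \min_{\mathcal{T},\,S'}\alpha(\mathbf{e}_{\mathcal{T}, X, S'}),
\]
where the second equality is the optimality of $(\mathcal{T}^*, S^*)$. Hence $T'$ is a generalised $k$-Steiner minimum tree on $X$.

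The only non-trivial obstacle is the MST multiset invariance in the second paragraph; everything else is definition-chasing. This obstacle is handled cleanly because Lemma~\ref{MSTswap} already delivers a structured swap sequence whose length-preserving nature is forced as soon as both trees attain the minimum total length.
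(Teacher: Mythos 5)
Your proof is correct and follows essentially the same route as the paper's: invoke $\ell_1$-optimisability to obtain an optimal tree that is an MST on $X\cup S$, apply Lemma~\ref{MSTswap} to transform any other MST into it by length-preserving swaps, and conclude via the symmetry of $\alpha$. You are in fact slightly more careful than the paper, explicitly deducing that the swaps must all be equalities (since both trees have equal total length) and that the edge-length multisets therefore coincide, where the paper simply asserts this.
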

}
{
\begin{proof}
By the $\ell_1$-optimisability of $\alpha$ we may assume that $T$ is a minimum spanning tree on $X\cup S$. Let $T^{\,\prime}$ be any other minimum spanning tree $X\cup S$. By Lemma~\ref{MSTswap}, we can transform $T^{\,\prime}$ to $T$ by a series of edge swaps, each
of which replaces an edge with another of the same length. By the symmetry of $\alpha$ each such edge swap does not increase $\| T^{\,\prime}
\|_{\alpha}$.
 \end{proof}
}

Throughout this paper we  perform various constructions involving the unit ball $B$ for the given norm $\|\cdot\|$,
for instance calculating the intersections of two unit balls. Our main interest in this paper is in the computational nature, specifically the
time complexity, of a solution to any instance of the generalised $k$-Steiner tree problem. In order to find efficient algorithms, we need to perform these unit ball operations to within any fixed precision in constant time. We therefore restrict the norm
$\|\cdot\|$ so that its unit ball is always simple enough to perform these operations. We will provide more detail regarding
these restrictions in the next section. For similar  computational reasons we will also be placing a restriction on $\alpha$, and this is
discussed in Section \ref{topology3}.

\section{The Overlayed Oriented Dirichlet Cell Partition}\label{ODC2}
Let a norm $\| \cdot \|$ on $\mathbb{R}^2$ be given with corresponding unit ball $B$. Our aim in this section is to describe the
construction of the oriented Dirichlet cell (ODC) partition for any set $X$ of $n$ terminals embedded in this normed plane, and to show that it
can be constructed within a time of $O(n\log n)$. We also show that, with a time complexity of $O(n^2)$, multiple ODC
partitions can be overlayed. This final partition is the aforementioned \textit{overlayed ODC partition} (OODC partition), and is a core
component of our algorithm.

Georgakopoulos and Papadimitriou \cite{georg} allude to a simple method of constructing an ODC partition for terminals embedded in the Euclidean
plane. Unfortunately this method does not work for arbitrary normed planes. We circumvent this problem by defining a type of abstract Voronoi
diagram that is equivalent to the ODC partition, and then showing that this Voronoi diagram can be calculated in the required time.

We now state the first of three restrictions on $B$. We defer a discussion of these restrictions (including the description of a class of norms
that satisfy all of them) to the end of the section. Let the boundary of $B$ be denoted by $\mathrm{bd}(B)$.

\noindent\textbf{Restriction 1} \textit{The intersection points of any two translated copies of $\mathrm{bd}(B)$, and the intersection points of
any straight line and $\mathrm{bd}(B)$, can be calculated to within any fixed precision in constant time.}

\begin{lemma}\label{yLemma}There exist six points $\{\mathbf{y}_i:i=0,...,5\}$ on $\mathrm{bd}(B)$ such that for any pair of rotationally
consecutive ones, say $\mathbf{y}_i,\mathbf{y}_j$, we have $\| \mathbf{y}_i-\mathbf{y}_j \| = 1$. Moreover, these six points are
constructible.
\end{lemma}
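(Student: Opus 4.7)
The plan is to construct the six points explicitly, exploiting the central symmetry of $B$ so that three of them are just the negatives of the other three.

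First I fix $\mathbf{y}_0$ to be an intersection of $\mathrm{bd}(B)$ with the positive $x$-axis; this is a line-versus-$\mathrm{bd}(B)$ intersection and hence available in constant time by Restriction~1. After rotating coordinates (which preserves both the hypotheses and the conclusion) I may assume $\mathbf{y}_0=(a,0)$ with $a>0$. Central symmetry of $B$ places $-\mathbf{y}_0=(-a,0)$ on $\mathrm{bd}(B)$ and gives $\|\mathbf{y}_0-(-\mathbf{y}_0)\|=2$.

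Next I produce $\mathbf{y}_1$ by the intermediate value theorem. The arc of $\mathrm{bd}(B)$ from $\mathbf{y}_0$ to $-\mathbf{y}_0$ lying in the closed upper half plane is parameterised continuously, and the continuous function $\mathbf{y}\mapsto\|\mathbf{y}-\mathbf{y}_0\|$ takes values $0$ and $2$ at its endpoints, so it attains the value~$1$ at some $\mathbf{y}_1=(b,c)$ with $c\geq 0$. Equivalently, $\mathbf{y}_1$ is an intersection of $\mathrm{bd}(B)$ with the translate $\mathbf{y}_0+\mathrm{bd}(B)$, so Restriction~1 supplies it in constant time. In fact $c>0$, since $c=0$ would force $\mathbf{y}_1=\pm\mathbf{y}_0$ and thus distance $0$ or $2$.

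I then set $\mathbf{y}_2:=\mathbf{y}_1-\mathbf{y}_0=(b-a,c)$ and $\mathbf{y}_{i+3}:=-\mathbf{y}_i$ for $i=0,1,2$. Because $\|\mathbf{y}_2\|=\|\mathbf{y}_1-\mathbf{y}_0\|=1$, all six points lie on $\mathrm{bd}(B)$. Verifying that consecutive distances equal $1$ is a one-line check: each difference $\mathbf{y}_{i+1}-\mathbf{y}_i$ (indices mod~$6$) simplifies to one of $\pm\mathbf{y}_0,\pm\mathbf{y}_1,\pm\mathbf{y}_2$, all of unit norm.

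The remaining, and I expect most delicate, step is to verify that these six points are rotationally consecutive in the order listed. I would do this with three planar cross products: $\mathbf{y}_0\times\mathbf{y}_1=ac$, $\mathbf{y}_1\times\mathbf{y}_2=ac$, and $\mathbf{y}_2\times(-\mathbf{y}_0)=ac$ are all strictly positive, so the Euclidean polar angles of $\mathbf{y}_0,\mathbf{y}_1,\mathbf{y}_2,-\mathbf{y}_0$ strictly increase through $[0,\pi]$. Central symmetry adds $\pi$ to each polar angle and so extends the strict monotone ordering to the complete cyclic sequence $\mathbf{y}_0,\mathbf{y}_1,\ldots,\mathbf{y}_5$ on $\mathrm{bd}(B)$. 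Distinctness of the six points is a byproduct of this strict monotonicity, and constructibility is clear since every step used either Restriction~1 or a trivial subtraction and negation.
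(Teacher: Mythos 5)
Your construction is correct and is essentially the paper's own: the standard hexagon construction, taking one boundary point, a second obtained by intersecting $\mathrm{bd}(B)$ with its translate (equivalently, the parallelogram/difference step), and the negatives of these three by central symmetry. You supply more detail than the paper does --- the intermediate value argument for the existence of the intersection and the cross-product verification of the cyclic order --- but the underlying idea and the resulting six points are the same.
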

\begin{proof}The standard ruler and compass construction of the hexagon will produce these six points, where $B$ plays the role of the circle in
the construction. Given any point $\mathbf{y}_5$ on $\mathrm{bd}(B)$ construct a translation of $\mathrm{bd}(B)$ centered around $\mathbf{y}_5$.
Let $\mathbf{y}_0$ be the first intersection point of the two boundaries as we traverse the boundary of the original ball anticlockwise from
$\mathbf{y}_5$. Let $\mathbf{y}_1 = \mathbf{y}_0- \mathbf{y}_5$. Note that this point also lies on $\mathrm{bd}(B)$ and that
$\mathbf{y}_0\mathbf{y}_1\mathbf{o}\mathbf{y}_5$ is a parallelogram. The remaining three points are constructed using the central symmetry of
$B$. See Figure \ref{figureHex} for an example where $B$ is a tilted ellipse.  The distance properties of the lemma follow easily by
construction.
 \end{proof}

\begin{figure}[htb]
\begin{center}

\includegraphics[scale=0.5]{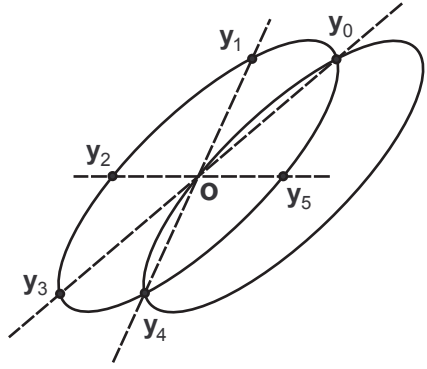}

\end{center}
\caption{The standard hexagon construction\label{figureHex}}
\end{figure}

For any two directions  $\phi_i$ and $\phi_j$ in the plane $K(\mathbf{y},\phi_i,\phi_j)$ denotes the cone defined to be the set consisting of all rays emanating from $\mathbf{y}$ in direction $\phi$, for $\phi_i\leq \phi\leq \phi_j$. For each $\mathbf{y}_i$ from Lemma \ref{yLemma} let $\theta_i$ be the
direction of the ray $\overrightarrow{\mathbf{o}\mathbf{y}_i}$, where $\mathbf{o}$ is the center of $B$. We assume that the $\{\theta_i\}$ are
ordered in an anticlockwise manner, and two consecutive directions will be denoted by $\theta_i$ and $\theta_{i+1}$ (i.e. the $\mathrm{mod}\ 6$
notation will be omitted). As another example we show, in Figure \ref{figureRect}, the six directions produced when $B$ is the unit ball of the
rectilinear plane.

\begin{figure}[htb]
\begin{center}

\includegraphics[scale=0.5]{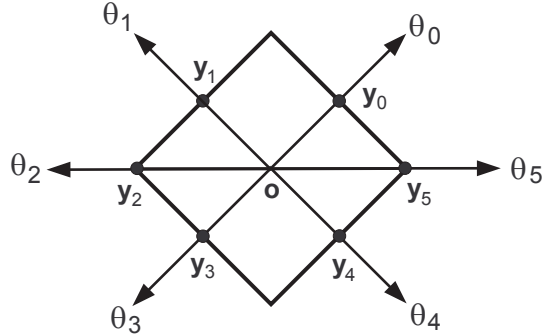}

\end{center}
\caption{The unit ball of the rectilinear plane and corresponding $\{\theta_i\}$\label{figureRect}}
\end{figure}

\begin{lemma}\label{tilemma}Let $\mathbf{x}\in B$, $\mathbf{x}\neq \mathbf{o}$ and $\mathbf{a}$ and $\mathbf{b}$ points on the boundary of $B$ such
that the segments $\mathbf{ao}$ and $\mathbf{bx}$ intersect in a point $\mathbf{p}$. Then $\|\mathbf{a}-\mathbf{x}\| \leq
\|\mathbf{b}-\mathbf{x}\|$.
\end{lemma}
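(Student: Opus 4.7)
The plan is to use $\mathbf{p}$ as an intermediate point in a triangle-inequality argument. Since $\mathbf{p}$ lies on the segment $\mathbf{bx}$, collinearity (with $\mathbf{p}$ between $\mathbf{b}$ and $\mathbf{x}$) gives the exact identity $\|\mathbf{b} - \mathbf{x}\| = \|\mathbf{b} - \mathbf{p}\| + \|\mathbf{p} - \mathbf{x}\|$. Combined with $\|\mathbf{a} - \mathbf{x}\| \leq \|\mathbf{a} - \mathbf{p}\| + \|\mathbf{p} - \mathbf{x}\|$ from the triangle inequality, the whole lemma reduces to the single comparison $\|\mathbf{a} - \mathbf{p}\| \leq \|\mathbf{b} - \mathbf{p}\|$.

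To establish this, I would parametrise $\mathbf{p}$ using the first hypothesis: since $\mathbf{p}$ lies on segment $\mathbf{oa}$, and $\mathbf{o}$ is the centre of $B$ (hence the origin of the vector space), we have $\mathbf{p} = \lambda \mathbf{a}$ for some $\lambda \in [0,1]$. Because $\mathbf{a} \in \mathrm{bd}(B)$ has $\|\mathbf{a}\| = 1$, this gives the \emph{exact} value $\|\mathbf{a} - \mathbf{p}\| = \|(1-\lambda)\mathbf{a}\| = 1 - \lambda$. On the other side, $\|\mathbf{b} - \mathbf{p}\| = \|\mathbf{b} - \lambda\mathbf{a}\|$, and the reverse triangle inequality together with $\|\mathbf{b}\| = 1$ yields $\|\mathbf{b} - \lambda\mathbf{a}\| \geq \|\mathbf{b}\| - \lambda\|\mathbf{a}\| = 1 - \lambda$. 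Hence $\|\mathbf{a} - \mathbf{p}\| \leq \|\mathbf{b} - \mathbf{p}\|$, which plugged into the chain above gives the lemma.

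The main conceptual hurdle, insofar as there is one, is recognising that $\mathbf{p}$ is exactly the intermediate point that makes both inequalities tight enough to chain: the hypothesis $\mathbf{p} \in \mathbf{oa}$ pins down $\|\mathbf{a} - \mathbf{p}\|$ \emph{exactly} (not just bounds it), and the hypothesis $\mathbf{p} \in \mathbf{bx}$ makes $\|\mathbf{b} - \mathbf{p}\| + \|\mathbf{p} - \mathbf{x}\|$ collapse exactly to $\|\mathbf{b} - \mathbf{x}\|$. Once the right intermediate point is identified, no appeal to the convexity or central symmetry of $B$ beyond the triangle inequality itself is needed, so the argument works in any normed plane.
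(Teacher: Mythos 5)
Your proof is correct and is essentially the paper's argument in a different packaging: the reverse triangle inequality $\|\mathbf{b}-\lambda\mathbf{a}\|\geq 1-\lambda$ is exactly the triangle inequality applied to $\triangle\mathbf{pbo}$, and your two collinearity identities are what let the paper cancel $\|\mathbf{a}\|$ against $\|\mathbf{b}\|$ after adding the inequalities for $\triangle\mathbf{pax}$ and $\triangle\mathbf{pbo}$. The reduction to the single comparison $\|\mathbf{a}-\mathbf{p}\|\leq\|\mathbf{b}-\mathbf{p}\|$ is a clean way to organise it, but the ingredients are identical.
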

\begin{proof}Applying the Triangle Inequality to $\triangle \mathbf{pax}$ and $\triangle \mathbf{pbo}$, we obtain
$\|\mathbf{a}-\mathbf{x}\| + \|\mathbf{b}\| \leq \|\mathbf{b}-\mathbf{x}\| +
\|\mathbf{a}\| $ from which the lemma follows; see Figure \ref{fig}.
 \end{proof}

\begin{figure}[htb]
\begin{center}

\includegraphics[scale=0.5]{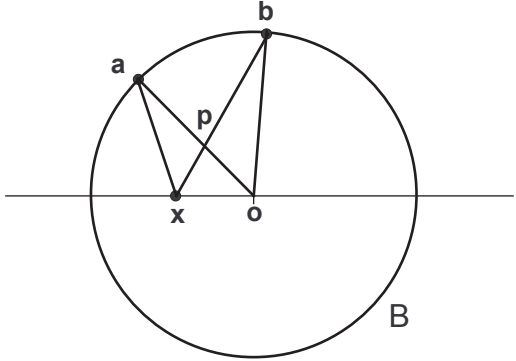}

\end{center}
\caption{Illustration of the proof of Lemma \ref{tilemma}\label{fig}}
\end{figure}

\begin{lemma}\label{cony}Let $\mathbf{y}$ be any point in the plane. Then there exists a minimum spanning tree $T$ on $X\cup \{\mathbf{y}\}$
with the following property: for each $i=0,...,5$ there is at most one point of $X$ adjacent to $\mathbf{y}$ in $T$ and lying within
$K(\mathbf{y}, \theta_i,\theta_{i+1})$, and this point is a closest terminal to $\mathbf{y}$ in the cone.
\end{lemma}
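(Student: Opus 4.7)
The plan is to derive both parts of the lemma from a single geometric fact---a ``cone inequality''---and then run MST edge-swap arguments. The cone inequality to establish is: for any two points $\mathbf{x}_1,\mathbf{x}_2\in K(\mathbf{y},\theta_i,\theta_{i+1})$ with $\|\mathbf{x}_2-\mathbf{y}\|\geq\|\mathbf{x}_1-\mathbf{y}\|$, one has $\|\mathbf{x}_1-\mathbf{x}_2\|\leq\|\mathbf{x}_2-\mathbf{y}\|$. To prove this I would translate so that $\mathbf{y}=\mathbf{o}$ and rescale so $\|\mathbf{x}_2\|=1$ (so $\mathbf{x}_2$ lies on the arc of $\mathrm{bd}(B)$ between $\mathbf{y}_i$ and $\mathbf{y}_{i+1}$ and $\mathbf{x}_1\in B\cap K$), then apply Lemma~\ref{tilemma} to bound $\|\mathbf{x}_2-\mathbf{x}_1\|$ above by a distance of the form $\|\mathbf{y}_j-\mathbf{x}_1\|$ for $j\in\{i,i+1\}$; this boundary-point distance is in turn bounded by the identity $\|\mathbf{y}_i-\mathbf{y}_{i+1}\|=1$ from Lemma~\ref{yLemma}, finishing the bound $\|\mathbf{x}_2-\mathbf{x}_1\|\leq 1$. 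Extracting this statement cleanly from Lemmas~\ref{tilemma} and~\ref{yLemma} is the main obstacle of the proof; the MST manipulations that follow are essentially matroidal bookkeeping.

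Granted the cone inequality, let $T$ be an MST on $X\cup\{\mathbf{y}\}$ of minimum degree at $\mathbf{y}$; I claim $T$ has both required properties. For the first property, suppose some cone $K(\mathbf{y},\theta_i,\theta_{i+1})$ contains two $\mathbf{y}$-neighbours $\mathbf{x}_1,\mathbf{x}_2$ of $T$ with $\|\mathbf{y}\mathbf{x}_2\|\geq\|\mathbf{y}\mathbf{x}_1\|$. The swap $T-\mathbf{y}\mathbf{x}_2+\mathbf{x}_1\mathbf{x}_2$ is a spanning tree whose total weight does not exceed that of $T$ by the cone inequality, so it is an MST; but it has strictly smaller $\mathbf{y}$-degree, contradicting the choice of $T$.

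For the second property, suppose the $\mathbf{y}$-neighbour $\mathbf{x}$ in some cone $K$ is not a closest terminal in $K$, and choose $\mathbf{x}^*\in X\cap K$ with $\|\mathbf{y}\mathbf{x}^*\|<\|\mathbf{y}\mathbf{x}\|$. By the first property $\mathbf{x}^*$ is not a $\mathbf{y}$-neighbour of $T$. The cycle created in $T$ by adding $\mathbf{y}\mathbf{x}^*$ cannot contain $\mathbf{y}\mathbf{x}$ (else the standard MST cycle rule would force $\|\mathbf{y}\mathbf{x}\|\leq\|\mathbf{y}\mathbf{x}^*\|$, a contradiction), so its other edge at $\mathbf{y}$ is some $\mathbf{y}\mathbf{x}'$ with $\mathbf{x}'\neq\mathbf{x}$; the first property then forces $\mathbf{x}'\notin K$. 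Since $\|\mathbf{y}\mathbf{x}'\|\leq\|\mathbf{y}\mathbf{x}^*\|$ by the MST cycle rule, the swap $T^{\,\prime}=T-\mathbf{y}\mathbf{x}'+\mathbf{y}\mathbf{x}^*$ is another MST, and now $\mathbf{x}$ and $\mathbf{x}^*$ are both $\mathbf{y}$-neighbours of $T^{\,\prime}$ inside $K$. Applying the first-property swap to $T^{\,\prime}$---namely $T^{\,\prime\prime}=T^{\,\prime}-\mathbf{y}\mathbf{x}+\mathbf{x}^*\mathbf{x}$, which by the cone inequality has weight no larger than that of $T^{\,\prime}$---yields an MST of strictly smaller $\mathbf{y}$-degree than $T$, again contradicting the choice of $T$.
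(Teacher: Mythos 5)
Your architecture mirrors the paper's: the ``cone inequality'' you isolate is exactly the Claim inside the paper's proof ($\|\mathbf{x}_0-\mathbf{x}_1\|\le\|\mathbf{y}-\mathbf{x}_1\|$ for the nearer point $\mathbf{x}_0$ and the farther point $\mathbf{x}_1$ in a common cone), and both arguments then finish with edge swaps. But there are two genuine gaps. The first is that the cone inequality is where essentially all the work of this lemma lives, and you have only sketched it. Your second step --- bounding $\|\mathbf{y}_j-\mathbf{x}_1\|$ by ``the identity $\|\mathbf{y}_i-\mathbf{y}_{i+1}\|=1$'' --- does not follow from that identity alone: it amounts to showing $B\cap K\subseteq \mathbf{y}_j+B$, which needs a further application of Lemma~\ref{tilemma} (to place the boundary arc from $\mathbf{y}_i$ to $\mathbf{y}_{i+1}$ inside $\mathbf{y}_j+B$) together with convexity; likewise your first application of Lemma~\ref{tilemma} requires verifying that the segments $\mathbf{o}\mathbf{x}_2$ and $\mathbf{y}_j\mathbf{x}_1$ actually intersect. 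The plan can be completed along these lines, but as written it is an assertion rather than a proof; the paper spends most of its proof on precisely this point, via a two-case analysis on which side of the chord $\mathbf{a}\mathbf{x}_1$ the nearer point lies.

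The second gap is more serious: the intermediate swap in your second-property argument goes the wrong way. You delete $\mathbf{y}\mathbf{x}'$ and insert $\mathbf{y}\mathbf{x}^*$, citing $\|\mathbf{y}\mathbf{x}'\|\le\|\mathbf{y}\mathbf{x}^*\|$ from the cycle rule; but that is the inequality that \emph{prevents} the result from being a minimum spanning tree --- if it is strict, $T^{\,\prime}$ is strictly longer than $T$, it is not an MST, and the concluding contradiction (an MST of smaller degree at $\mathbf{y}$) evaporates. The repair is to drop the intermediate swap entirely: the cycle-rule observation you already make shows that the tree path from $\mathbf{x}^*$ to $\mathbf{x}$ must pass through $\mathbf{y}$ (otherwise $\mathbf{y}\mathbf{x}$ would lie on the cycle closed by $\mathbf{y}\mathbf{x}^*$ and the cycle rule would force $\|\mathbf{y}\mathbf{x}\|\le\|\mathbf{y}\mathbf{x}^*\|$). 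Hence $T-\mathbf{y}\mathbf{x}+\mathbf{x}\mathbf{x}^*$ is already a spanning tree, its weight does not increase by the cone inequality, and its degree at $\mathbf{y}$ is smaller --- this single swap is the paper's replacement of $(\mathbf{y},\mathbf{x}_1)$ by $(\mathbf{x}_1,\mathbf{x}_0)$, and it finishes your argument correctly.
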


\begin{proof}  Let $T^{\,\prime}$ be a minimum spanning tree on $X\cup\{\mathbf{y}\}$. Let $\mathbf{x}_0\in X$ be a terminal in
$K(\mathbf{y}, \theta_i,\theta_{i+1})$ that is closest to $\mathbf{y}$, and suppose that $(\mathbf{y},\mathbf{x}_1)\in E(T^{\,\prime})$ where
$\mathbf{x}_1\in X$ is any other terminal in $K(\mathbf{y}, \theta_i,\theta_{i+1})$. We show that we can replace the edge
$(\mathbf{y},\mathbf{x}_1)$ in $T^{\,\prime}$ by either $(\mathbf{y},\mathbf{x}_0)$ or $(\mathbf{x}_1,\mathbf{x}_0)$ so that the resulting tree
is still a minimum spanning tree on $X\cup\{\mathbf{y}\}$.  From this, the statement of the lemma follows.

Assume first that the path in $T^{\,\prime}$ connecting $\mathbf{y}$ and $\mathbf{x}_0$ passes through $\mathbf{x}_1$. In this case we can
replace $(\mathbf{y},\mathbf{x}_1)$ by $(\mathbf{y},\mathbf{x}_0)$ without losing connectivity or increasing the length of $T^{\,\prime}$.

Assume, on the other hand, that the path in $T^{\,\prime}$ connecting $\mathbf{y}$ and $\mathbf{x}_0$ does not pass through $\mathbf{x}_1$.

\textbf{Claim:} $\|\mathbf{x}_0 - \mathbf{x}_1 \| \leq \|\mathbf{y} - \mathbf{x}_1\|$.\\
Without loss of generality,  we assume that $\mathbf{y}=\mathbf{o}$, $\|\mathbf{x}_1\|=1$, and $K(\mathbf{o},
\theta_i,\theta_{i+1})$ intersects the boundary of the unit ball $B$ in an arc from $\mathbf{a}$ to $\mathbf{b}$ (with $\|\mathbf{a} -
\mathbf{b}\| = 1$). We can also assume, without loss of generality, that $\mathbf{x}_1$ lies on the same side of the line through
$\mathbf{o}\mathbf{x}_0$ as $\mathbf{b}$.  The convexity of $B$ implies that the line segments $\mathbf{ox}_1$ and $\mathbf{ab}$ intersect, hence by
Lemma~\ref{tilemma} we have
\begin{equation}\label{eq1}
\|\mathbf{a} - \mathbf{x}_1 \| \leq \|\mathbf{a} - \mathbf{b}\| = 1.
\end{equation}
We now prove the claim via two cases, illustrated in Figure~\ref{figureCone}.
\begin{figure}[htb]
\begin{center}
\includegraphics[scale=0.5]{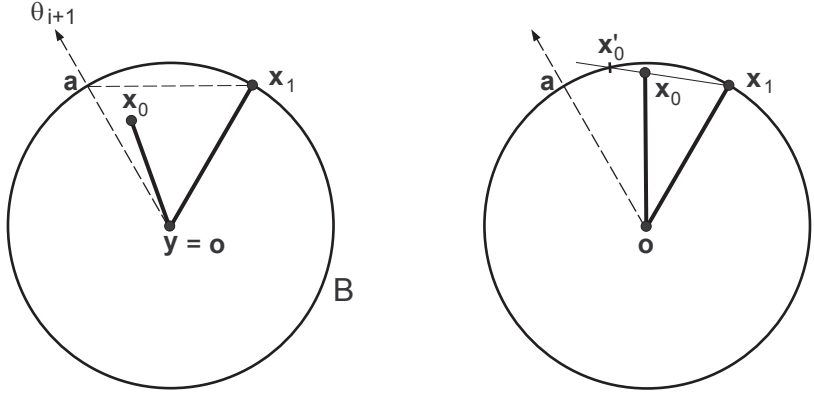}
\end{center}
\caption{The two cases of the Claim in the proof of Lemma~\ref{cony}\label{figureCone}}
\end{figure}
Firstly, suppose $\mathbf{x}_0$ and $\mathbf{o}$ are on the same side of $\mathbf{ax}_1$ (including the case where $\mathbf{x}_0$ lies on
$\mathbf{a}\mathbf{x}_1$). By Inequality~(\ref{eq1}) $\mathbf{a}$ lies in the unit ball centered at $\mathbf{x}_1$, so, by convexity, $\mathbf{x}_0$ also
lies in this unit ball. Hence,  $\| \mathbf{x}_0-\mathbf{x}_1\|\leq 1$ as required. For the second case, suppose that
$\mathbf{x}_0$ and $\mathbf{o}$ are on opposite sides of $\mathbf{ax}_1$. Let the ray from $\mathbf{x}_1$ passing through $\mathbf{x}_0$
intersect $B$ at $\mathbf{x}_0^\prime$. Then $\mathbf{x}_0^\prime$ and $\mathbf{o}$ are also on opposite sides of $\mathbf{ax}_1$, and hence, by
Lemma~\ref{tilemma}, $\| \mathbf{x}_0-\mathbf{x}_1\|\leq \| \mathbf{x}_0^\prime-\mathbf{x}_1\|\leq \|
\mathbf{a}-\mathbf{x}_1\|$. Therefore, $\| \mathbf{x}_0-\mathbf{x}_1\|\leq 1$ by Inequality~(\ref{eq1}), and the claim is
proven.

By the claim we can now replace the edge $(\mathbf{y},\mathbf{x}_1)$ by $(\mathbf{x}_1,\mathbf{x}_0)$ without losing connectivity or increasing
the length of $T^{\,\prime}$.
 \end{proof}

For each $i=0,...,5$, the $i$-th \textit{oriented Dirichlet cell} (ODC) of $\mathbf{w} \in X$ is the set:
$$\{\mathbf{y}\in\mathbb{R}^2:\|\mathbf{w}-\mathbf{y}\|=\mathrm{min}\{\| \mathbf{x}-\mathbf{y}\|:\mathbf{x}\in X
\cap K(\mathbf{y},\theta_i,\theta_{i+1})\}\}$$ In other words, this is the set of all points $\{\mathbf{y}\}$ whose closest terminal in the cone
$K(\mathbf{y},\theta_i,\theta_{i+1})$ is $\mathbf{w}$. We will show that the set of $i$-th ODCs, called the \textit{$i$-th ODC partition of $X$}
is a type of Voronoi diagram.

In \cite{chew} Chew and Drysdale present an ``expanding waves" view of Voronoi diagrams. If $n$ pebbles are dropped simultaneously into a pond,
the places where wave fronts meet define the Voronoi diagram on the $n$ points of impact. In the Euclidean case the wavefronts are circular, but
in theory any closed convex curve $C$ containing the origin can qualify as a wavefront and thereby define an abstract Voronoi diagram. For any such $C$ and
set of terminals $X$ we say that the resulting diagram is the \textit{Voronoi diagram of $X$ based on $C$}. The \textit{bisector based on} $C$ for any two points $\mathbf{x}, \mathbf{y}$ is defined as the intersection $V_{\mathbf{x}}\cap V_{\mathbf{y}}$ where $\{V_{\mathbf{x}},V_{\mathbf{y}}\}$ is the Voronoi diagram of $\{\mathbf{x},\mathbf{y}\}$ based on $C$.

We may define this Voronoi diagram more formally as follows. Let $\delta_C:\mathbb{R}^2\to\mathbb{R}$ be the distance function based on $C$; in
other words, for any points $\mathbf{x}_0,\mathbf{x}_1\in\mathbb{R}^2$ we let
$\delta_C(\mathbf{x}_0,\mathbf{x}_1)=\inf\{\lambda^{-1}:\lambda(\mathbf{x}_1-\mathbf{x}_0)\in C\}$. We then define a region
$V_{\mathbf{x}}=\{\mathbf{y}\in \mathbb{R}: \delta_C(\mathbf{x},\mathbf{y})=\min\{\delta_C(\mathbf{x}^\prime,\mathbf{y}):\mathbf{x}^\prime\in X\}\}$ for each
$\mathbf{x}\in X$. The set $\{V_{\mathbf{x}}\}$ is the required Voronoi diagram based on $C$. In Figure \ref{figureHexBis} we give an example of what the boundary of wavefronts look like when $C$ is a regular hexagon.

\begin{figure}[htb]
\begin{center}
\includegraphics[scale=0.3]{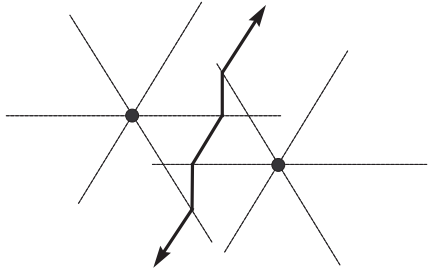}
\end{center}
\caption{Regular hexagon based Voronoi diagram for two points \label{figureHexBis}}
\end{figure}

\begin{proposition}For any $i=0,..,5$ the $i$-th ODC partition of $X$ is equal to the Voronoi diagram of $X$ based on the
sector $B \cap K(\mathbf{o}, 180^\circ+\theta_i,180^\circ+\theta_{i+1})$.
\end{proposition}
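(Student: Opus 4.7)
The plan is to unwind both definitions into the same explicit description. Fix $i$ and set $K_i := K(\mathbf{o},\theta_i,\theta_{i+1})$ and $C := B \cap K(\mathbf{o}, 180^\circ+\theta_i, 180^\circ+\theta_{i+1})$, so that $C$ is a sector of the unit ball lying in $-K_i$. The central observation, which also explains the $180^\circ$ rotation in the statement, is that although $C$ points \emph{opposite} to $K_i$, using $C$ as the wavefront around a source $\mathbf{x}$ causes the wave to reach $\mathbf{y}$ exactly when $\mathbf{x}$ lies in $K(\mathbf{y},\theta_i,\theta_{i+1})$.

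The first step is to compute $\delta_C$ explicitly. From $\delta_C(\mathbf{x},\mathbf{y}) = \inf\{\lambda^{-1}: \lambda>0,\ \lambda(\mathbf{y}-\mathbf{x})\in C\}$, containment in the cone forces $\mathbf{y}-\mathbf{x}\in -K_i$, equivalently $\mathbf{x}\in K(\mathbf{y},\theta_i,\theta_{i+1})$; otherwise no admissible $\lambda$ exists and $\delta_C(\mathbf{x},\mathbf{y}) = \infty$. When $\mathbf{x}\in K(\mathbf{y},\theta_i,\theta_{i+1})$, the remaining constraint $\lambda(\mathbf{y}-\mathbf{x})\in B$ reduces by absolute homogeneity of $\|\cdot\|$ to $\lambda\|\mathbf{y}-\mathbf{x}\|\leq 1$, so the supremum of admissible $\lambda$ is $1/\|\mathbf{y}-\mathbf{x}\|$ and hence $\delta_C(\mathbf{x},\mathbf{y}) = \|\mathbf{x}-\mathbf{y}\|$.

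The second step is to substitute this back into the Voronoi definition. A point $\mathbf{y}$ lies in $V_{\mathbf{w}}$ iff $\delta_C(\mathbf{w},\mathbf{y}) = \min_{\mathbf{x}\in X}\delta_C(\mathbf{x},\mathbf{y})$. Terminals outside the cone $K(\mathbf{y},\theta_i,\theta_{i+1})$ contribute $\infty$ and are therefore invisible to the minimum, so the condition is equivalent to $\mathbf{w}\in X\cap K(\mathbf{y},\theta_i,\theta_{i+1})$ together with $\|\mathbf{w}-\mathbf{y}\| = \min\{\|\mathbf{x}-\mathbf{y}\|: \mathbf{x}\in X\cap K(\mathbf{y},\theta_i,\theta_{i+1})\}$. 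This is exactly the defining condition of the $i$-th ODC of $\mathbf{w}$, so $V_{\mathbf{w}}$ coincides with that ODC, and the two partitions of $X$ agree cell by cell.

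There is no deep obstacle in the argument; the only delicate point is the direction reversal between the source $\mathbf{x}$ and the point $\mathbf{y}$ where the wave arrives, which is the reason the sector in the statement is rotated by $180^\circ$ relative to the cone used in the definition of the ODCs. I would flag this once at the outset and then let the computation of $\delta_C$ handle the rest.
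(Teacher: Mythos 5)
Your proof is correct and is essentially the argument the paper has in mind: the paper's entire proof is the one-line remark that the claim ``follows immediately from the central symmetry of $B$'', and your explicit computation of $\delta_C$ --- in particular the observation that $\lambda(\mathbf{y}-\mathbf{x})$ lying in the $180^\circ$-rotated sector is equivalent to $\mathbf{x}\in K(\mathbf{y},\theta_i,\theta_{i+1})$, with the norm constraint then giving $\delta_C(\mathbf{x},\mathbf{y})=\|\mathbf{x}-\mathbf{y}\|$ --- is exactly the unwinding that makes that remark precise. Nothing is missing.
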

\begin{proof}This follows immediately from the central symmetry of $B$.
 \end{proof}

Figure \ref{figureODC} illustrates an ODC partition when the original unit ball is a circle (i.e. the Euclidean case) and therefore $C$ is a circular sector.

\begin{figure}[htb]
\begin{center}

\includegraphics[scale=0.3]{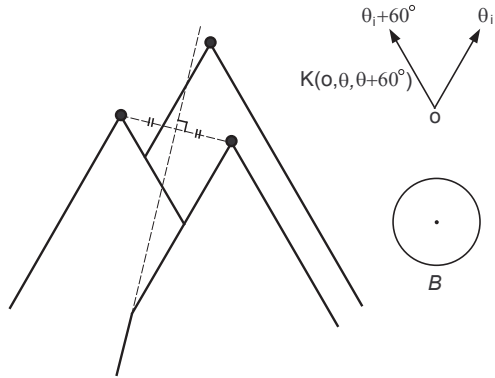}

\end{center}
\caption{An ODC partition of a three-terminal example \label{figureODC}}
\end{figure}

The next theorem now gives us the required time complexity for calculating the $i$-th ODC partition under certain conditions.

\begin{theorem}\label{conditions}\cite{chew} The Voronoi diagram of $n$ points based on a closed convex shape $C$ can be constructed in
$O(n\log n)$ time and $O(n)$ space as long as the following operations can be performed in constant time:
\begin{enumerate}
    \item Given two points, find the boundary where the two wavefronts meet.
    \item Given two such boundaries, compute their intersection(s).
\end{enumerate}
\end{theorem}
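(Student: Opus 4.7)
The plan is to use a divide-and-conquer strategy on the point set, following the classical template for planar Voronoi diagram construction but abstracted so that it depends only on the two assumed constant-time primitives. First I would sort the $n$ sites by $x$-coordinate (one-time $O(n\log n)$ preprocessing) and then recursively partition the sorted list into a left half $L$ and a right half $R$ of roughly equal size, recursively constructing $V(L)$ and $V(R)$. The heart of the argument is the merge step: combining $V(L)$ and $V(R)$ into $V(L\cup R)$ in time linear in $n$, which yields the recurrence $T(n)=2T(n/2)+O(n)=O(n\log n)$ and linear total space.

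For the merge, I would show that the set of points equidistant (in the distance $\delta_C$) to their nearest sites in $L$ and in $R$ forms a single unbounded curve $\gamma$ — the \emph{merge chain} — which is $y$-monotone with respect to a chosen direction. Granting the monotonicity, the chain can be traced from top to bottom: at each stage I maintain a current pair $(\ell,r)\in L\times R$ with $\gamma$ locally lying on the bisector of $\ell$ and $r$, use Operation~1 to obtain this bisector in constant time, and use Operation~2 to find the nearest intersection with the bounding edges of the current Voronoi cells of $\ell$ in $V(L)$ and $r$ in $V(R)$, advancing to the next pair. Since each edge of $V(L)$ and $V(R)$ is entered and left at most a constant number of times during the traversal, the whole chain is built in $O(n)$ time. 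Once $\gamma$ is in hand, discarding the portions of $V(L)$ lying to the right of $\gamma$ and of $V(R)$ lying to the left completes $V(L\cup R)$.

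The main obstacle, and the only place where the convexity of $C$ is genuinely used, is verifying that the merge chain $\gamma$ is a single connected, unbounded, monotone curve. This reduces to showing that for any two sites the $\delta_C$-bisector is an unbounded curve separating the plane into the two nearest-neighbour regions, and that in the presence of several sites these bisectors assemble into a chain that does not backtrack with respect to the sweep direction. The classical argument here invokes the fact that the regions $\{\mathbf y : \delta_C(\mathbf x,\mathbf y)\le \delta_C(\mathbf x',\mathbf y)\}$ are star-shaped with respect to $\mathbf x$ whenever $C$ is convex, from which the non-backtracking property of $\gamma$ follows by a standard exchange argument.

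Finally, I would verify the two primitives in the recursive step: Operation~1 is exactly what is needed to construct the current bisector segment and so is used once per step along $\gamma$; Operation~2 supplies the next event point (either an intersection of the current bisector with a Voronoi edge of $V(L)$ or $V(R)$, or the transition to the next bisector) in constant time. Together with the monotonicity of $\gamma$, these give the $O(n)$ merge and hence the claimed $O(n\log n)$ time and $O(n)$ space bounds.
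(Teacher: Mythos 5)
First, note that the paper does not prove this theorem at all: it is imported verbatim from Chew and Drysdale \cite{chew}, so there is no in-paper proof to compare against. Your proposal is, in outline, a reconstruction of the argument in that reference (divide-and-conquer with a linear-time merge along a dividing chain), so the overall strategy is the right one. The problem is that everything that makes the theorem non-trivial for a \emph{general} closed convex shape $C$ is concentrated in the one step you flag as ``the main obstacle'' and then dispose of by assertion.

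Concretely, three things are missing or wrong. (a) Star-shapedness of the dominance regions $\{\mathbf y : \delta_C(\mathbf x,\mathbf y)\le \delta_C(\mathbf x',\mathbf y)\}$ does \emph{not} yield $y$-monotonicity of the merge chain ``by a standard exchange argument''; what it yields is that the chain meets each cell of $V(L)$ and $V(R)$ in a connected arc. For non-Euclidean convex distance functions the chain genuinely need not be monotone with respect to the splitting direction, and the linear merge bound must instead be argued from the connected-arc property (each Voronoi edge is charged $O(1)$ times because the chain cannot re-enter a cell), which is precisely the claim you leave unproved. (b) When $\mathrm{bd}(C)$ contains line segments --- the polygonal case, which is the main case this paper cares about --- the bisector of two sites can contain two-dimensional pieces, so ``the boundary where the two wavefronts meet'' is not well defined until a canonical curve inside the degenerate bisector is chosen consistently; without this the merge chain need not even be a curve, let alone monotone. (c) The distance $\delta_C$ is not symmetric unless $C$ is centrally symmetric, and in this paper $C$ is a \emph{sector} of $B$, so it never is; the connectivity and unboundedness of bisectors, and the fact that the chain is a single unbounded component rather than several, all need to be re-derived in the asymmetric setting rather than borrowed from the Euclidean template. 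As it stands, your write-up proves the recurrence $T(n)=2T(n/2)+O(n)$ conditional on exactly the lemma that constitutes the content of the theorem.
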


We therefore also impose the following restriction on $B$.

\noindent\textbf{Restriction 2} \textit{Let $C^{\, \prime}$ be any sector of $B$. Then, given any two points, we can find the boundary where the
two wavefronts based on $C^{\, \prime}$ meet, and, given two such boundaries, we can compute their intersection. Moreover, these operations can
be performed to within any fixed precision in constant time}.

The next step is to overlay the six $i$-th ODC partitions. The theorem we use, which is a result from \cite{georg}, assumes that the regions in each partition have boundaries consisting of straight line segments. We therefore state our third restriction on $B$.

\noindent\textbf{Restriction 3} \textit{The shape of $B$ implies that the $i$-th ODC partition of any set of points is piecewise linear (i.e. the boundary
of any ODC consists of straight line segments)}.

\begin{theorem}\cite{georg}\label{thr8}Let $q$ be any positive integer. Then $q$ linear plane partitions can be overlayed in $O(q^2n^2)$ time,
where $n$ is the total number of regions in each partition.
\end{theorem}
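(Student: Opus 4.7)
The plan is to first bound the combinatorial complexity of the overlay and then show that this bound can be realised algorithmically. By Restriction~3 each of the $q$ partitions is piecewise linear, so a standard Euler-formula argument shows that a partition with $n$ regions has $O(n)$ line segments along its boundary; summing over all $q$ partitions gives $O(qn)$ segments in total.

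Next I would classify the vertices of the overlay into two types: vertices of one of the original partitions (only $O(qn)$ in total), or intersection points of two segments from different partitions. Since any pair of line segments meets in at most one point, there are at most $\binom{qn}{2}=O(q^2n^2)$ intersection vertices, and Euler again shows that the numbers of edges and faces of the overlay are of the same order. This gives the desired upper bound on the output size.

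The algorithm itself would iterate over all pairs of segments coming from distinct partitions and compute each intersection in constant time (line-segment intersection being a standard constant-time primitive), doing $O(q^2n^2)$ work in all. The intersections lying on each segment then have to be stitched together with the original endpoints into the overlay's combinatorial structure, for example as a doubly-connected edge list. The main obstacle I expect is avoiding an extra $\log(qn)$ factor in this last step: a naive sort of the up to $O(qn)$ crossings on each of the $O(qn)$ segments would be too expensive. This can be circumvented by amortising globally, either via a plane sweep that reports crossings in order of occurrence, or by a bucket-style sort that exploits the fixed coordinate precision already assumed for the normed plane, keeping the DCEL construction within the same $O(q^2n^2)$ budget.
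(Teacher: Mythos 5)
The paper does not actually prove this statement: it is imported verbatim from \cite{georg}, so there is no internal proof to compare yours against, and your outline has to be judged on its own. The combinatorial counting is correct and is surely the intended argument: segments belonging to the same partition do not cross, so every overlay vertex beyond the $O(qn)$ original ones arises from a pair of segments in distinct partitions, giving $O(q^2n^2)$ vertices and, by Euler's formula, the same order of edges and faces. One caveat at the very first step: Euler's formula alone does not yield ``$n$ regions implies $O(n)$ segments'', because a piecewise-linear partition can have arbitrarily many degree-$2$ vertices (bends) along the boundary between just two regions. You need the additional fact --- true for the ODC partitions because each bisector based on a sector of a polygonal or elliptical unit ball is a polygonal line with $O(1)$ breakpoints --- that each edge of the subdivision consists of a bounded number of segments.

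The genuine gap is in the assembly step, which you rightly identify as the crux but do not actually close. A Bentley--Ottmann plane sweep runs in $O((m+I)\log m)$ time on $m$ segments with $I$ intersections, so it introduces exactly the $\log(qn)$ factor you are trying to avoid rather than eliminating it. The bucket-sort idea is also not licensed by the paper's hypotheses: Restrictions~1 and~2 assert that individual geometric primitives can be evaluated to any fixed precision in constant time; they do not place the coordinates in a bounded integer universe, which is what a radix or bucket sort requires. To stay within the stated bound you should instead invoke an optimal intersection/arrangement algorithm --- for example Chazelle and Edelsbrunner's $O(m\log m + I)$ construction of the arrangement of $m$ segments, which here costs $O(qn\log(qn) + q^2n^2) = O(q^2n^2)$ since $\log(qn)=O(qn)$ --- or build the overlay incrementally, merging one partition at a time into the current subdivision with a linear-time overlay of connected subdivisions, so that the $i$-th merge costs time proportional to the $O(in^2)$ new crossings it creates and the total telescopes to $O(q^2n^2)$. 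With either substitution your argument goes through.
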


As a consequence of the previous results, within $O(n\log n)$ total time we can calculate the $i$-th ODC partition of the plane for
each $i$, and then, in a time of $O(n^2)$, overlay these six partitions resulting in the OODC partition. It is easily observed that
the OODC partition has $O(n^2)$ regions.

Let $R$ be a region of the OODC partition and let $\{D_j:j\in I\}$ (where $I$ is an index-set) be the set of ODCs such that $R=\bigcap\{D_j:j\in I\}$. Note then that $|I|\leq 6$. For each $j\in I$ suppose that $p_j$ is the terminal associated with $D_j$; in other words, $D_j$ is the ODC of $p_j$. Finally let $C_X(R)=\{p_j\}$. The power of the OODC partition lies in the next theorem, which now follows from Lemma
\ref{cony}.

\begin{theorem}\label{mainTh}Let $\mathbf{s}$ be any point in $R$. Then there exists a minimum spanning tree $T$ on $X\cup \{\mathbf{s}\}$ such that the set of
neighbours of $\mathbf{s}$ in $T$ is a subset of $C_X(R)$.
\end{theorem}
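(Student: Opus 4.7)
The plan is to prove this as a fairly direct corollary of Lemma~\ref{cony} together with the definition of the $i$-th ODC. First, I apply Lemma~\ref{cony} with $\mathbf{y} = \mathbf{s}$, which guarantees the existence of a minimum spanning tree $T$ on $X \cup \{\mathbf{s}\}$ with the property that in each cone $K(\mathbf{s}, \theta_i, \theta_{i+1})$ (for $i = 0, \ldots, 5$), $\mathbf{s}$ has at most one neighbour in $T$, and whenever such a neighbour exists, it is a closest terminal to $\mathbf{s}$ within that cone.

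Next, I identify, for each $i$, the unique $i$-th ODC containing $\mathbf{s}$. Since $\mathbf{s} \in R = \bigcap_{j \in I} D_j$ and the $i$-th ODC partition covers the plane, some $D_{j(i)} \in \{D_j : j \in I\}$ is the $i$-th ODC containing $\mathbf{s}$, with associated terminal $p_{j(i)} \in C_X(R)$. By the definition of the $i$-th ODC, the fact that $\mathbf{s}$ lies in the $i$-th ODC of $p_{j(i)}$ means precisely that
\[
\|p_{j(i)} - \mathbf{s}\| = \min\{\|\mathbf{x} - \mathbf{s}\| : \mathbf{x} \in X \cap K(\mathbf{s}, \theta_i, \theta_{i+1})\},
\]
so $p_{j(i)}$ is a closest terminal to $\mathbf{s}$ in the cone $K(\mathbf{s}, \theta_i, \theta_{i+1})$.

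Combining these two observations, whenever $\mathbf{s}$ has a neighbour in $T$ lying in $K(\mathbf{s}, \theta_i, \theta_{i+1})$, Lemma~\ref{cony} lets me take it to be any closest terminal to $\mathbf{s}$ in that cone, so in particular we may take it to be $p_{j(i)}$. Hence every neighbour of $\mathbf{s}$ in $T$ is one of $p_{j(0)}, \ldots, p_{j(5)}$, all of which lie in $C_X(R)$, proving the theorem.

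There is essentially no obstacle here beyond bookkeeping: the real content was already packaged into Lemma~\ref{cony} (the six-cone structure and the swapping argument that allows us to replace any neighbour of $\mathbf{s}$ in a cone by the closest terminal in that cone). The only subtlety worth being careful about is that the cone in the definition of the ODC is anchored at the test point $\mathbf{s}$ rather than at the terminal, which is exactly what is needed to match the ``neighbours of $\mathbf{y}$ in cones around $\mathbf{y}$'' formulation of Lemma~\ref{cony}; the central symmetry of $B$ and the consistent use of the $\theta_i$'s make the alignment work.
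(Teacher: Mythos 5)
Your proof is correct and follows exactly the route the paper intends: the paper gives no explicit proof of Theorem~\ref{mainTh}, stating only that it ``follows from Lemma~\ref{cony}'', and your argument is precisely that derivation, matching the closest terminal in each cone $K(\mathbf{s},\theta_i,\theta_{i+1})$ to the terminal of the $i$-th ODC containing $R$. The only detail worth noting is that in case of ties the statement of Lemma~\ref{cony} guarantees only that the neighbour is \emph{a} closest terminal in the cone; to take it to be $p_{j(i)}$ specifically you should appeal to the lemma's proof, where the chosen closest terminal $\mathbf{x}_0$ is arbitrary.
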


The question arises as to which norms exist with unit balls satisfying all three restrictions. Let $\mathcal{V}$ be the class of norms defined
by the condition that each norm's unit ball is either a polygon or an ellipse. Suppose that $\| \cdot
\|\in \mathcal{V}$ and that the corresponding unit ball is $B$. Clearly Restriction 1 is true for $B$. Given any two points, their
bisector (based on a sector of $B$) will be a polygonal line that can be computed with some simple vector operations (see \cite{chew}; for elliptic unit balls this follows since ellipses are linear transformations of a circle). The same
holds true for intersecting two boundaries, and therefore Restriction 2 is satisfied. Since any $i$-th ODC partition consists of segments of
bisectors and segments of the limiting rays of $K(\mathbf{y},\theta_i,\theta_{i+1})$ for some $\mathbf{y}$, Restriction 3 follows immediately. This class of norms includes (amongst others) the well-known Euclidean, rectilinear, $\ell_{\infty}$, and fixed-orientation planes. In this paper we do not undertake a deeper investigation into the question of whether Theorem \ref{thr8} can be generalised to include norms whose unit ball is neither linear nor elliptical, but leave it as an open question.

\section{Generalised Steiner Tree Problems for a Fixed Topology}\label{topology3}
By using the results of the previous section, specifically Theorem \ref{mainTh}, each main iteration of our algorithm produces a feasible internal topology which, recall, is a forest $\mathcal{F}$ spanning the set of all Steiner points such that $\mathcal{F}$'s internal nodes are Steiner points and its leaves are terminals. Finding the optimal coordinates of the Steiner points for the topology is a problem known in the literature
as the \textit{fixed topology Steiner tree problem}. We state the problem more formally as follows: given a set $A$ of $c\leq 6k^\prime$
embedded terminals, a set $\mathcal{S}$ of $k^\prime$ free (i.e. non-embedded) Steiner points, and a tree topology $\mathcal{T}$ spanning all
these nodes, we wish to find the coordinates of the Steiner points (i.e. find the set $S$) such that
$\alpha(\mathbf{e}_{\mathcal{T},A,S})$ is minimised, where $\mathbf{e}_{\mathcal{T},A,S}=(\| e_1\|,...,\|
e_{c+k^\prime-1}\|)$. Observe that we may assume $\mathcal{T}$ is a tree topology since each component of $\mathcal{F}$ may be solved separately.

The fixed topology problem is interesting in its own right, but is also a key step of our main algorithm. Since $k$ (and therefore $c$) is constant we are not particularly interested in the time complexity of this step. We therefore introduce one more restriction:

\noindent\textbf{Restriction 4} \textit{{$\alpha$ {and $B$} are such that a solution to the fixed topology Steiner tree problem is computable to within any fixed precision in finite time.}}

{As far as we know there are no instances of $\alpha$ and $B$ for which it has been demonstrated that the fixed topology problem is impossible to solve. Since we do not place restrictions on the methods or time-complexity of potential solutions to this problem (besides finiteness), we cannot fully characterise the class of cost-functions and norms that satisfy Restriction 4.} Note also that for many cost-functions and norms there may exist numerical methods (for instance gradient descent) that solve the fixed topology problem to any finite degree of accuracy. We now briefly discuss a few functions and norms that satisfy Restriction 4.

\begin{enumerate}
    \item[(1)]$\alpha(\mathbf{e}_{\mathcal{T},A,S})=\sum \| e_i \|$. In this case we are dealing with the well-known Steiner
    tree problem for a fixed topology. In the Euclidean plane the problem has an $O(c^2)$-time solution provided that no point has degree
    larger than $3$, see \cite{hwang}. Unfortunately, for the $k$-Steiner tree problem degree $4$ points do exist (but degree $5$ do not; see
    \cite{rubin}). A similar result holds for the rectilinear and other fixed orientation planes
    \cite{brazil}.
    \item[(2)] $\alpha(\mathbf{e}_{\mathcal{T},A,S})=\sum \| e_i\|^p$, $p>0$. This is referred to as the power-$p$ Steiner tree
    problem for a fixed topology. In the Euclidean plane with $p=2$, Ganley \cite{ganley} shows that the problem can be solved within time
    $O(c)$.
    \item[(3)] $\alpha(\mathbf{e}_{\mathcal{T},A,S})=\displaystyle\lim_{p\to\infty}\left(\sum\| e_i\|^p\right)^{1/p}$, i.e.
    the bottleneck Steiner problem for a fixed topology. This problem has an $O(c^2)$ solution
    in the rectilinear plane, see \cite{ganley2}. In the Euclidean and general $\ell_p$ planes there exists various numerical algorithms that can
    calculate a solution to any desired precision, see for instance \cite{drezner}, \cite{love}. A fully polynomial time approximation scheme (FPTAS)
    exists for the problem in the Euclidean plane (see \cite{ganley}). Recently Bae et al. \cite{bae2,bae1} produced the first exact algorithm for solving this problem.
\end{enumerate}

\section{Updating a Minimum Spanning Tree}\label{MST4}
This section deals with the final phase of our algorithm. At this stage the algorithm must select an appropriate set of cycle-edges to delete after forming the union of a minimum spanning tree on $X$ and a forest $F$, where $F$ has a given feasible internal topology and optimally located Steiner points for that topology. As in \cite{georg} for the case $k=1$, the fact that one can \textit{update} (in constant time) a minimum spanning tree on $X$ to include the Steiner points ultimately reduces time complexity: without an update method a minimum spanning tree would have to be constructed for every choice of feasible internal topology. By Corollary \ref{MST-coroll} in Section \ref{prelim1} we know that as long as the locations of the Steiner points are optimal then any minimum spanning tree on $X\cup S$ will also be an optimal generalised $k$-Steiner tree.

Many papers exist in the literature that deal with the time complexity of updating a minimum spanning tree when a single new node is introduced; see
for instance \cite{john} where the authors show that a tree on $n$ nodes can be updated with a new node in $O(\log n)$ parallel time using
$n/\log n$ exclusive read, exclusive write, parallel random access machines (EREW PRAMs). Georgakopoulos and Papadimitriou utilise a
preprocessing step in \cite{georg} so that a minimum spanning tree can be updated in constant time with a single new point.

Let $F$ be a solution to the fixed topology Steiner problem for some choice of feasible internal topology $\mathcal{F}$. As will become clear in Section~\ref{algorithm5} the requirement that the updated tree, say $T_F$, is a minimum spanning tree on its nodes can be slightly relaxed in our algorithm. It is only required that $T_F$ be a shortest total length tree spanning $X\cup S$ such that the neighbour-set of each Steiner point in $F$ is the same as in $T_F$. We therefore require that only edges not belonging to $F$ are deleted during the update process. The intuitive reason for modifying the update process in this way is to deal with cases when some component of $F$ is not a minimum spanning tree on its nodes (this can occur, for instance, in solutions to the bottleneck Steiner tree problem).

In the remainder of this section we introduce a few preliminary results, formalise the details of the update process, and prove in Theorems \ref{connected} and \ref{notconnected} that, given a preprocessing stage, updating only requires constant time.

Let $N(T,s)$ denote the set of neighbours of a node $s$ in a graph $T$. A forest $F$ with node-set $A \cup S$, where $A \subseteq
X$ and $S \subseteq \mathbb{R}^2$ with $\vert S\vert \leq k$, is called \textit{viable} if and only if $\{\mathbf{x}\in
V(F):\mbox{$\mathbf{x}$ is a leaf of $F$}\}=A$ and $\vert N(F,\mathbf{s})\vert\leq 6$ for every $\mathbf{s}\in S$. A shortest total length tree $T_F$, such that $V(T_F)= X\cup S$ and $N(T_F,\mathbf{s})=N(F,\mathbf{s})$ for every
$\mathbf{s}\in S$, is referred to as a \textit{minimum $F$-fixed spanning tree}. We use the symbol $P_T(\mathbf{x},\mathbf{y})$ to represent a
path through $T$ with endpoints $\mathbf{x}$ and $\mathbf{y}$, and we use $\ell_T(\mathbf{x},\mathbf{y})$ to denote the longest edge on
$P_T(\mathbf{x},\mathbf{y})$. We will make use of the following theorem.

\begin{theorem}\cite{korte}\label{kortee} A tree $T$ is a minimum spanning tree on $X$ if and only if for every
$\mathbf{x},\mathbf{y} \in X$, $\| e\| \leq\|\mathbf{x}-\mathbf{y}\|$ for every $e\in
E(P_T(\mathbf{x},\mathbf{y}))$.
\end{theorem}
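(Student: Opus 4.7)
The proof splits into two directions, both standard arguments about spanning trees.

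For the $(\Rightarrow)$ direction, I would use the classical cut-and-swap contradiction. Assume $T$ is a minimum spanning tree on $X$ and suppose, for contradiction, that there exist $\mathbf{x},\mathbf{y}\in X$ and an edge $e\in E(P_T(\mathbf{x},\mathbf{y}))$ with $\|e\|>\|\mathbf{x}-\mathbf{y}\|$. Deleting $e$ splits $T$ into two components, and since $e$ lies on the unique $\mathbf{x}$-to-$\mathbf{y}$ path in $T$, the terminals $\mathbf{x}$ and $\mathbf{y}$ land in opposite components. Adding the edge $(\mathbf{x},\mathbf{y})$ reconnects them, producing a new spanning tree $T'$ on $X$ with total length $\|T\|-\|e\|+\|\mathbf{x}-\mathbf{y}\|<\|T\|$, contradicting the minimality of $T$.

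For the $(\Leftarrow)$ direction, let $T^*$ be any MST on $X$ and assume that $T$ satisfies the path-optimality property in the statement. The key observation is that for every edge $e^*=(\mathbf{x},\mathbf{y})\in E(T^*)\setminus E(T)$, adding $e^*$ to $T$ creates a cycle whose other edges form precisely $P_T(\mathbf{x},\mathbf{y})$, and the hypothesis guarantees that each of those edges has length at most $\|e^*\|$. To convert this into a comparison of total lengths, I would invoke the symmetric basis exchange property of the graphic matroid, which provides a bijection $\phi:E(T)\setminus E(T^*)\to E(T^*)\setminus E(T)$ such that $T-e+\phi(e)$ is again a spanning tree for every $e\in E(T)\setminus E(T^*)$. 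For any such $e_i$ with $\phi(e_i)=e_j^*$, the fact that $T-e_i+e_j^*$ is a tree forces $e_i\in E(P_T(\mathbf{x},\mathbf{y}))$, where $\mathbf{x},\mathbf{y}$ are the endpoints of $e_j^*$; hence by hypothesis $\|e_i\|\leq\|e_j^*\|$. Summing over $\phi$ yields $\|T\|\leq\|T^*\|$, and combined with the minimality of $T^*$ we conclude $\|T\|=\|T^*\|$, so $T$ is itself an MST.

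The main obstacle I anticipate is in the $(\Leftarrow)$ direction. A tempting shortcut is to apply Lemma~\ref{MSTswap} directly with $T_1=T$ and $T_0=T^*$: the first swap replaces some $e_i\in E(T)$ by some $e_j\in E(T^*)$ with $\|e_i\|\geq\|e_j\|$, and since the resulting graph is a spanning tree $e_i$ lies on $P_T(\mathbf{x},\mathbf{y})$ for the endpoints $\mathbf{x},\mathbf{y}$ of $e_j$, so the hypothesis forces the equality $\|e_i\|=\|e_j\|$. The difficulty is extending this cleanly through the full sequence of swaps, because it is not immediately obvious that the path-optimality property is inherited by each intermediate tree. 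The symmetric-exchange bijection sidesteps this bookkeeping by pairing up edges in one shot, which is why I would prefer it as the route to the inequality $\|T\|\leq\|T^*\|$.
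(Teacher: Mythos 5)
Your proof is correct, but note that the paper offers no proof of this statement at all: it is quoted from Korte and Vygen \cite{korte} as a known characterisation of minimum spanning trees (the ``path optimality'' condition), so any comparison is with the textbook argument rather than with anything in the paper. Your forward direction is the standard cut-and-paste exchange and is fine. For the converse, your route through a bijection $\phi:E(T)\setminus E(T^*)\to E(T^*)\setminus E(T)$ with $T-e+\phi(e)$ a spanning tree for every $e$ is valid, but be careful with the name: the \emph{symmetric basis exchange} axiom of matroids only gives you, for a single $e\in B_1\setminus B_2$, some $f\in B_2\setminus B_1$ with both $B_1-e+f$ and $B_2-f+e$ bases; the existence of a single bijection that works simultaneously for all elements is the strictly stronger bijective exchange theorem of Brualdi, which is true but deserves an explicit citation rather than being folded into ``the symmetric exchange property.'' If you want to avoid that machinery, the usual elementary argument is to take an MST $T^*$ maximising $|E(T)\cap E(T^*)|$, pick $e=(\mathbf{u},\mathbf{v})\in E(T)\setminus E(T^*)$, find on the cycle of $T^*+e$ an edge $f=(\mathbf{x},\mathbf{y})$ crossing the cut induced by $T-e$, and observe that the hypothesis gives $\|e\|\leq\|f\|$ while minimality of $T^*$ gives $\|f\|\leq\|e\|$, so $T^*+e-f$ is an MST sharing more edges with $T$ --- a contradiction unless $E(T)=E(T^*)$. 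Your diagnosis of why Lemma~\ref{MSTswap} alone does not close the converse (the path-optimality property is not obviously inherited by the intermediate trees in the swap sequence, and the swap inequality points the wrong way) is accurate and is exactly the reason a different device is needed here.
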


Now let $T$ be a minimum spanning tree on $X$ and let PP1 denote a preprocessing stage to calculate $\ell_T(\mathbf{x},\mathbf{y})$ for every pair of nodes
$\mathbf{x},\mathbf{y} \in V(T)$. PP1 requires $O(n^2)$ time and $O(n^2)$ space. We incorporate a consistent tie-breaking procedure for choosing between edges of exactly the same length during PP1. The tie-breaking procedure places any order on $E(T)$ and chooses the earlier edge in this ordering
whenever a tie occurs. The next theorem shows that if $F$ is connected (i.e., $F$ is a tree) then updating a minimum spanning tree takes constant time.

\begin{theorem}\label{connected}Let $T$ be a minimum spanning tree on $X$, and assume that PP1 has been performed.
If $F$ is connected and viable, then a minimum $F$-fixed spanning tree $T_F$ can be constructed from $T$ in $O(k^2)$ time.
\end{theorem}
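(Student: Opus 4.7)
The plan is to reduce this update problem to an MST computation on a small auxiliary graph whose vertex set is the leaf set $A$ of $F$. Since $F$ is connected and viable with $|N(F,\mathbf{s})|\leq 6$ for each Steiner point, a handshake count on $F$ (a tree with $|A|+|S|$ vertices where each $a\in A$ has degree $1$) yields $|A|\leq 4|S|+2$, so $|V(F)|$ and $|E(F)|$ are both $O(k)$. The key structural observation is that any minimum $F$-fixed spanning tree has the form $T_F=F\cup(T\setminus D)$, where $D\subseteq E(T)$ is a set of exactly $|A|-1$ edges lying in the Steiner hull $T_A$ of $A$ in $T$. This follows from Theorem~\ref{kortee} together with a standard contraction argument: in the multigraph obtained from $T$ by identifying the terminals of $A$ into a single super-vertex, an MST uses exactly the $T$-edges retained in $T_F$, while the $|A|-1$ excluded edges constitute $D$ and attain the maximum total weight $\sum_{e\in D}\|e\|$ among all valid partitions of $V(T_A)$ into $|A|$ pieces, one per terminal.

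To identify $D$ in $O(k^2)$ time, I would build an auxiliary complete graph $H$ on vertex set $A$ with edge weight $\|\ell_T(a_i,a_j)\|$ for each pair $\{a_i,a_j\}$; by PP1 each weight is obtained in $O(1)$ time, so $H$ is constructed in $O(|A|^2)=O(k^2)$ time. I would then execute a modified Kruskal's algorithm on $H$: process $H$-edges in non-decreasing order of weight (with ties broken by the PP1 tie-break order on the corresponding $T$-edge $\ell_T(a_i,a_j)$), and add $(a_i,a_j)$ to the selected set if and only if both (i) its endpoints lie in different components of the selected set and (ii) the $T$-edge $\ell_T(a_i,a_j)$ has not already been chosen by an earlier selection. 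After all $H$-edges are processed, the $|A|-1$ selected edges correspond to $|A|-1$ distinct $T$-edges, which together form $D$; the tree $T_F$ is then represented implicitly as $(T\setminus D)\cup F$ using only $O(k)$ additional bookkeeping.

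The main obstacle will be proving the correctness of this modified Kruskal's, specifically that condition (ii) never causes an unwanted skip, so that exactly $|A|-1$ edges are selected. I would argue this via an exchange argument: whenever the algorithm would skip $(a_i,a_j)$ because $e=\ell_T(a_i,a_j)$ was used by an earlier selected pair $(a_p,a_q)$, the cut of $T$ induced by removing $e$ separates $\{a_i,a_p\}$ from $\{a_j,a_q\}$ (up to relabelling), and the PP1 tie-breaking rule ensures that the $H$-edges $(a_i,a_p)$ and $(a_j,a_q)$, whose weights are at most $\|e\|$, were processed before $(a_i,a_j)$; an inductive argument then shows that $a_i$ and $a_j$ must already lie in the same selected component, making the skip harmless. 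Optimality of the resulting $D$---that $(T\setminus D)\cup F$ has minimum total length among all $F$-fixed spanning trees---follows from the cycle property of MSTs applied to the contracted multigraph, since the kept $T$-edges form an MST of $T_A$ after $A$-contraction and therefore $D$ has maximum total weight. Both the construction of $H$ and the modified Kruskal's complete in $O(k^2)$ time, giving the theorem.
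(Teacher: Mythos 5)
Your overall framing---reduce the update to choosing a set $D$ of $|A|-1$ edges of $T$ whose removal isolates the terminals of $A$ from one another, with minimality certified via Theorem~\ref{kortee} on the $A$-contracted graph---is sound, and the $O(k^2)$ accounting via PP1 is fine. The genuine gap is in the correctness of the modified Kruskal selection, and it is exactly where you predicted trouble. The invariant your exchange argument needs, namely that whenever $(\mathbf{x}_i,\mathbf{x}_j)$ is skipped by rule (ii) its endpoints already lie in one selected component, is false. Take $T$ to be a star with centre $\mathbf{u}$ and leaves $A=\{\mathbf{x}_1,\mathbf{x}_2,\mathbf{x}_3\}$, all three edges of equal length, with PP1 tie-break order $\mathbf{u}\mathbf{x}_1$, then $\mathbf{u}\mathbf{x}_2$, then $\mathbf{u}\mathbf{x}_3$. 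Then $\ell_T(\mathbf{x}_1,\mathbf{x}_2)=\ell_T(\mathbf{x}_1,\mathbf{x}_3)=\mathbf{u}\mathbf{x}_1$ while $\ell_T(\mathbf{x}_2,\mathbf{x}_3)=\mathbf{u}\mathbf{x}_2$. After $(\mathbf{x}_1,\mathbf{x}_2)$ is selected, rule (ii) forces a skip of $(\mathbf{x}_1,\mathbf{x}_3)$ even though $\mathbf{x}_3$ is still isolated; and the ``rescuing'' pair $(\mathbf{x}_2,\mathbf{x}_3)$, which you claim the tie-break pushes earlier, in fact comes \emph{later}, since its key $\mathbf{u}\mathbf{x}_2$ succeeds $\mathbf{u}\mathbf{x}_1$ in the PP1 order. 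The algorithm happens to recover here because $(\mathbf{x}_2,\mathbf{x}_3)$ is eventually processed, but your inductive argument does not go through, so you have not shown that $|A|-1$ edges are always selected, nor (consequently) that $T\setminus D$ really places each terminal of $A$ in its own component, which is what makes $F\cup(T\setminus D)$ acyclic.

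The paper sidesteps the selection problem entirely: it deletes the \emph{whole} set $\{\ell_T(\mathbf{x}_i,\mathbf{x}_j):\mathbf{x}_i,\mathbf{x}_j\in A,\ i\neq j\}$ from $T\cup F$ and proves by induction on $|A|$---adding one terminal $\mathbf{x}_{b+1}$ at a time and showing, by a case analysis on where $\mathbf{x}_{b+1}$ sits relative to the tree paths already considered, that each new terminal contributes exactly one new edge to this set---that the set has exactly $|A|-1$ elements and that removing it leaves the graph connected; this is recorded as Corollary~\ref{corA}. Minimality then follows, much as you intend, from Theorem~\ref{kortee} applied to a complete graph on $X$ in which pairs inside $A$ receive weight zero. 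If you want to salvage your route, the fact you actually need is precisely Corollary~\ref{corA}: once the full pairwise set is known to have size $|A|-1$ and to separate the terminals, no Kruskal-style filtering is required, and rule (ii) becomes vacuous rather than something to be argued around.
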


\begin{proof}
Let $G=T\cup F$, let $A= V(F) \cap X$, and note that $\vert A\vert \leq 6k$. A number of cycles may
occur in $G$, each one of them containing a path through $F$ with endpoints from $A$. Let $T^{\,\prime}$ be the graph obtained by deleting the
set of edges $\{\ell_T(\mathbf{x}_i,\mathbf{x}_j):\mathbf{x}_i,\mathbf{x}_j\in A, i\not= j \}$ from $G$. We will show that $T^{\,\prime} = T_F$,
which suffices to prove the proposition since $T^{\,\prime}$ can clearly be constructed in $O(k^2)$ time.

To prove that $T^{\,\prime} = T_F$ we first show that $T^{\,\prime}$ is acyclic and spans $X\cup S$. Every cycle of $G$ is of the form
$P_F(\mathbf{x}_i,\mathbf{x}_j),P_T(\mathbf{x}_j,\mathbf{x}_i)$, and therefore deleting every $\ell_T(\mathbf{x}_i,\mathbf{x}_j)$ from $G$
produces an acyclic graph. We use induction on $\vert A\vert$ to prove that $T^{\,\prime}$ is connected. Let
$A_b=\{\mathbf{x}_1,...,\mathbf{x}_b\}\subseteq A$ for some $b\in \{2,...,6k\}$, let $F_b$ be the subtree of $F$ induced by $S\cup A_b$, and let
$G_b=T\cup F_b$. Subtracting $L_b=\{\ell_T(\mathbf{x}_i,\mathbf{x}_j):1\leq i<j\leq b\}$ from $E(G_b)$ produces the graph $T_b$. For the base
case we let $b=2$. The only cycle of $G_2$ is  $P_F(\mathbf{x}_1,\mathbf{x}_2),P_T(\mathbf{x}_2,\mathbf{x}_1)$, and
$\ell_T(\mathbf{x}_2,\mathbf{x}_1)$ is an edge of this cycle. Therefore deleting $\ell_T(\mathbf{x}_2,\mathbf{x}_1)$ does not destroy the
connectivity of $T_2$ on $X\cup S$.

Next assume that $T_b$ spans $X\cup S$ for some $2\leq b\leq 6k-1$ and suppose that $\mathbf{x}_{b+1}\in X\backslash A_b$. Since $T_b$ is
connected and acyclic there is exactly one path connecting $\mathbf{x}_{b+1}$ to a node of $A_b$ not passing through any element of $S$, i.e. this path is of the form
$P_T(\mathbf{x}_{b+1},\mathbf{x}_r)$ for some unique $\mathbf{x}_r\in A_b$.  Let $A_{b+1}=A_b\cup \{\mathbf{x}_{b+1}\}$ and let $\mathbf{s}=N(F,\mathbf{x}_{b+1})$. Then $T_{b+1}$ is
the graph with $V(T_{b+1})= X\cup S$ and
$E(T_{b+1})=\left(E(T_b)\cup\{(\mathbf{s},\mathbf{x}_{b+1})\}\right)\backslash\{\ell_T(\mathbf{x}_{b+1},\mathbf{x}_i):\mathbf{x}_i\in A_b\}$.

\textbf{Claim:} For every $\mathbf{x}_i\in A_b$ either $\ell_T(\mathbf{x}_{b+1},\mathbf{x}_i)\in L_b$ or
$\|\ell_T(\mathbf{x}_{b+1},\mathbf{x}_i)\|=\|\ell_T(\mathbf{x}_{b+1},\mathbf{x}_r)\|$.\\
Let $\mathbf{x}_i\in A_b\backslash\{\mathbf{x}_r\}$ and consider the following two cases. If $\mathbf{x}_{b+1}$ lies on
$P_T(\mathbf{x}_i,\mathbf{x}_r)$ then
$\|\ell_T(\mathbf{x}_i,\mathbf{x}_r)\|=\max\{\|\ell_T(\mathbf{x}_i,\mathbf{x}_{b+1})\|,\|\ell_T(\mathbf{x}_{b+1},\mathbf{x}_r)\|\}
=\|\ell_T(\mathbf{x}_i,\mathbf{x}_{b+1})\|$ since $P_T(\mathbf{x}_{b+1},\mathbf{x}_r)$ is a path in $T_b$ and therefore does not contain
$\ell_T(\mathbf{x}_i,\mathbf{x}_r)$. Therefore $\ell_T(\mathbf{x}_{b+1},\mathbf{x}_i)\in L_b$. For the second case, if $\mathbf{x}_{b+1}$ does
not lie on $P_T(\mathbf{x}_i,\mathbf{x}_r)$ then let $\mathbf{y}$ be the first common point of the paths $P_T(\mathbf{x}_i,\mathbf{x}_r)$ and
$P_T(\mathbf{x}_{b+1},\mathbf{x}_r)$; see Figure~\ref{figureClaim}.
\begin{figure}[tb]
\begin{center}
\includegraphics[scale=0.5]{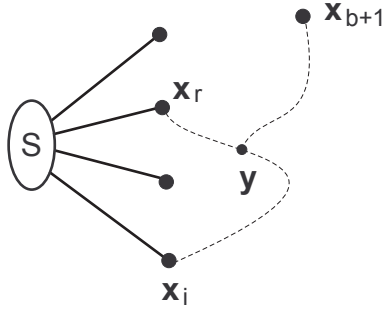}
\end{center}
\caption{The second case of the claim \label{figureClaim}}
\end{figure}
Note that $\mathbf{y}$ may be equal to $\mathbf{x}_r$. Clearly
\begin{equation}\label{eq2}
 \|\ell_T(\mathbf{x}_i,\mathbf{y})\|\geq\|\ell_T(\mathbf{y},\mathbf{x}_r)\|
\end{equation}
since $P_T(\mathbf{y},\mathbf{x}_r)$ is also a path in $T_b$. There are now two possibilities to consider; either
$\|\ell_T(\mathbf{x}_{b+1},\mathbf{x}_i)\|=\|\ell_T(\mathbf{x}_i,\mathbf{y})\|=\|\ell_T(\mathbf{x}_i,\mathbf{x}_r)\|\in L_b$, or
$\|\ell_T(\mathbf{x}_{b+1},\mathbf{x}_i)\|=\|\ell_T(\mathbf{x}_{b+1},\mathbf{y})\|=\|\ell_T(\mathbf{x}_{b+1},\mathbf{x}_r)\|$, where for each possibility
the second equality follows from Inequality~(\ref{eq2}). The claim follows.

By the above claim $E(T_{b+1})=\left(E(T_b)\cup\{(\mathbf{s},\mathbf{x}_{b+1})\}\right)\backslash\{\ell_T(\mathbf{x}_{b+1},\mathbf{x}_r)\}$ and
we deduce that $T_{b+1}$ has been constructed from $T_b$ by adding one edge from $F$ and then deleting an edge of $T$ on the resultant cycle. This completes the
induction argument, and hence $T^{\,\prime}$ is connected and spans $X\cup S$.

Next we prove that $T^{\,\prime}$ is a \textit{minimum} $F$-fixed spanning tree. Let $K$ be the complete graph on $X$. Furthermore, suppose that
the edges of $K$ are weighted by the function $w$, where
$$w((\mathbf{x},\mathbf{y})) = \left\{ \begin{array}{ll}
0 & \mbox{if $\mathbf{x}\in A$ and $\mathbf{y}\in A$},\\
\|\mathbf{x}-\mathbf{y}\| & \mbox{otherwise}. \end{array} \right.$$ Let $T_A$ be any spanning tree on $A$. Then clearly
$T^{\,\prime}$ is a minimum $F$-fixed spanning tree if and only if the graph $T_K$, where $V(T_K)=X$ and $E(T_K)=\left(E(T^{\,\prime})\cap
(X\times X)\right)\cup T_A$, is a minimum spanning tree of $K$ with the above weight function. But this follows from a simple application of
Theorem \ref{kortee}. Hence $T^{\,\prime} = T_F$, as required.
 \end{proof}

An immediate consequence of the above proof is the following result.

\begin{corollary}\label{corA}$\vert \{\ell_T(\mathbf{x}_i,\mathbf{x}_j):\mathbf{x}_i,\mathbf{x}_j\in A\}\vert=\vert A\vert-1$.
\end{corollary}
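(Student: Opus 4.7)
The plan is to read this off as a direct consequence of an edge count already implicit in the construction of $T_F$ during the proof of Theorem~\ref{connected}, rather than redoing the induction. First I will observe that $E(T) \cap E(F) = \emptyset$: by viability the internal nodes of $F$ are exactly $S$ and its leaves are exactly $A$, so every edge of $F$ has at least one endpoint in $S$, while every edge of $T$ lies inside $X$. Consequently
\[
|E(T \cup F)| = |E(T)| + |E(F)| = (|X|-1) + (|A|+|S|-1).
\]

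Next I will use the fact that the set $L := \{\ell_T(\mathbf{x}_i,\mathbf{x}_j) : \mathbf{x}_i,\mathbf{x}_j\in A,\ i\neq j\}$ consists entirely of edges of $T$, so that deleting $L$ from $E(T\cup F)$ removes exactly $|L|$ distinct edges. The proof of Theorem~\ref{connected} established that this deletion produces $T_F$, which is a spanning tree on $X\cup S$, hence has $|X|+|S|-1$ edges. Equating the two expressions for $|E(T_F)|$ gives
\[
|L| = (|X|-1) + (|A|+|S|-1) - (|X|+|S|-1) = |A|-1,
\]
which is the claimed identity.

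The only conceptual point requiring attention — and the one real ``obstacle'' — is that the indexed family $\{\ell_T(\mathbf{x}_i,\mathbf{x}_j)\}$ could a priori contain repetitions, since distinct terminal pairs may determine the same longest-edge-on-a-path. The corollary is precisely the assertion that after collapsing such repetitions exactly $|A|-1$ distinct edges remain; the tie-breaking rule of PP1 guarantees that $\ell_T(\mathbf{x}_i,\mathbf{x}_j)$ is a well-defined single edge for each pair, and the counting argument above then delivers the exact cardinality. A purely combinatorial alternative is to invoke the inductive structure inside the proof of Theorem~\ref{connected}: starting from $|L_2|=1$, the claim proved there forces $|L_{b+1}| = |L_b|+1$ (only $\ell_T(\mathbf{x}_{b+1},\mathbf{x}_r)$ is genuinely new at step $b+1$), yielding $|L_{|A|}|=|A|-1$. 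I prefer the edge-count version because it requires no further appeal to the tie-breaking reasoning. Degenerate cases (e.g.\ $|A|\le 2$) can be verified directly and agree with the formula.
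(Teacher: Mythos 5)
Your argument is correct. The paper itself offers no written proof of Corollary~\ref{corA} --- it is simply announced as ``an immediate consequence of the above proof'' --- so the relevant comparison is with what the proof of Theorem~\ref{connected} implicitly supplies. The authors' intended reading is almost certainly your second, inductive route: the displayed identity $E(T_{b+1})=\bigl(E(T_b)\cup\{(\mathbf{s},\mathbf{x}_{b+1})\}\bigr)\backslash\{\ell_T(\mathbf{x}_{b+1},\mathbf{x}_r)\}$ shows that exactly one genuinely new edge enters $L_b$ at each step, and $\ell_T(\mathbf{x}_{b+1},\mathbf{x}_r)\notin L_b$ because the whole path $P_T(\mathbf{x}_{b+1},\mathbf{x}_r)$ survives in $T_b$; starting from $|L_2|=1$ this gives $|A|-1$. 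Your preferred edge-count derivation is a legitimate and arguably cleaner alternative: it uses only the \emph{conclusion} of Theorem~\ref{connected} (that deleting $L$ from $T\cup F$ yields a spanning tree on $X\cup S$) plus elementary counting, and it sidesteps any further reliance on the tie-breaking convention. The one hypothesis you should state more carefully is $E(T)\cap E(F)=\emptyset$: viability alone does not quite force every edge of $F$ to meet $S$ (a component that is a single edge between two terminals has both endpoints as leaves), so you need $S\neq\emptyset$ for the connected $F$ of Theorem~\ref{connected} --- which is the only case of interest, since otherwise $F\subseteq K_{|A|}$ carries no Steiner points and the disjointness, hence your count, could fail. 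With that caveat recorded, both routes are sound and yield the same cardinality.
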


Next we extend Theorem~\ref{connected} to the case where $F$ is not necessarily connected, which must be considered if $k>1$. If $k>1$ we perform an additional preprocessing stage, PP2, to calculate a TRUE/FALSE table $H$ such that
$H_{e,\mathbf{y},\mathbf{z}}=\mbox{TRUE}$ if and only if edge $e \in E(P_T(\mathbf{y},\mathbf{z}))$. This requires at most $O(n^3)$ time and $O(n^3)$ space. For each connected component $F^i$ of $F$ let $A^i = V(F^i) \cap X$. We claim that \textit{Algorithm $F$-MST}, given in Table \ref{table1}, calculates a minimum $F$-fixed spanning tree for any viable forest $F$.

\begin{table}[h!b!p!]
\begin{center}
\begin{tabular}{|l|l|}
\hline
\multicolumn{2}{|c|}{\textbf{Algorithm $F$-MST}} \\
\hline
\multicolumn{2}{|c|}{\textbf{Input:} A set $X$ of points in the plane, a minimum spanning tree$\hspace{0.2cm}$} \\
\multicolumn{2}{|c|}{$T$ on $X$, and a viable forest $F$ with $t$ connected$\hspace{0.2cm}$} \\
\multicolumn{2}{|c|}{components.$\hspace{6.3cm}$} \\
\multicolumn{2}{|c|}{} \\
\multicolumn{2}{|c|}{\textbf{Output:} A minimum $F$-fixed spanning tree $T_F$ on $X\cup S$. $\hspace{0.6cm}$} \\
\hline
\textbf{Step} & \textbf{Description}\\
\hline
$\ $ & $\ $\\
$\ $ & Let $D^0=\emptyset$ and let $T^0=T$.\\
$\ $ & $\ $\\
1 & \textbf{For} $i=1$ \textbf{to} $t$ \textbf{Do}\\
$\ $ & \textbf{Begin}\\
1a & $\hspace{0.5cm}$\textbf{For} every distinct pair $\mathbf{x},\mathbf{y}\in A^i$ \textbf{Do}\\
$\ $ & $\hspace{0.5cm}$\textbf{Begin}\\
1a(i) & $\hspace{1cm}$Let $J$ be the graph with\\
$\ $ &$\hspace{1cm}V(J\,)=\left\{\begin{array}{ll}
\{\{\mathbf{x}\},\{\mathbf{y}\}\} & \mbox{if }i=1,\\
\{\{\mathbf{x}\},\{\mathbf{y}\},A^1,...,A^{i-1}\} & \mbox{if } i>1.
\end{array}\right.$\\
$\ $ & $\hspace{1cm}$and\\
$\ $ & $\hspace{1cm}E(J\,)=\{( U,U^{\,\prime}):\exists \mathbf{w}\in U \wedge\exists \mathbf{w}^\prime\in U^{\,\prime} \mbox{ such
that }$\\
$\ $ & $\hspace{2.6cm}H_{e',\mathbf{w},\mathbf{w}^\prime}=\mbox{FALSE}\ \forall e'\in D^{i-1}\}$.\\
$\ $ & $\ $\\
1a(ii) & $\hspace{1cm}$For every $e=(U,U^{\,\prime})\in E(J)$ let $\sigma_1^J(e)=\mathbf{w}$ and let\\
$\ $ & $\hspace{1cm}\sigma_2^J(e)=\mathbf{w}^\prime$ (where $\mathbf{w},\mathbf{w}^\prime$ are from the previous step).\\
1a(iii) & $\hspace{1cm}$Perform a search through $J$ to find the path\\
$\ $ & $\hspace{1cm}P^i=P_J(\{\mathbf{x}\},\{\mathbf{y}\})$.\\
$\ $ & $\hspace{1cm}$Let $\ell^{\,i}(\mathbf{x},\mathbf{y})$ be the edge from
$\left\{\ell_T(\sigma_1^J(e),\sigma_2^J(e)): e\in E(P^i)\right\}$\\
$\ $ & $\hspace{1cm}$of maximum length.\\
$\ $ & $\hspace{0.5cm}$\textbf{End}\\
$\ $ & $\ $\\
$\ $ & $\hspace{0.5cm}$Let $L^i=\{\ell^{\,i}(\mathbf{x},\mathbf{y}):\mathbf{x},\mathbf{y}\in A^i\}$,\\
$\ $ & $\hspace{1.15cm}D^i=D^{i-1}\cup L^i$,\\
$\ $ & $\hspace{1.15cm}G^i=T^{i-1}\cup F^i$,\\
$\ $ & $\hspace{1.15cm}T^i=\langle V(G^i),E(G^i)\backslash D^i\rangle$.\\
$\ $ & \textbf{End}\\
$\ $ & Let $T_F=T^t$.\\
\hline
\end{tabular}
\end{center}
\caption{Algorithm $F$-MST}\label{table1}
\end{table}

To understand Algorithm $F$-MST observe first that for $t=1$ the algorithm is identical to that of Theorem \ref{connected}. In other words $D^1=L^1=\{\ell_T(\mathbf{x}_i,\mathbf{x}_j):\mathbf{x}_i,\mathbf{x}_j\in A, i\not= j \}$ and this set is deleted from $G^1=T\cup F$ to produce $T_F$. The algorithm is inductive in nature, at each step adding component $F^i$ to the current tree $T^{i-1}$ and then deleting the longest edges on every cycle to get a tree $T^i$. All cycles that are obtained when adding $F^i$ to $T^{i-1}$ are of the form $P_{T^{i-1}}(\mathbf{x},\mathbf{y})\cup P_{F^i}(\mathbf{x},\mathbf{y})$ where $\mathbf{x},\mathbf{y}\in A^i$. Similarly to the proof of Theorem \ref{connected}, the required edge to be deleted at Step $i$ for the pair $\mathbf{x},\mathbf{y}$, namely $\ell^i(\mathbf{x},\mathbf{y})$, is simply the longest edge on $P_{T^{i-1}}(\mathbf{x},\mathbf{y})$. It is clear that $P_{T^{i-1}}(\mathbf{x},\mathbf{y})$ is either a path of $T$ or consists of alternating subpaths of $T$ and $F^j$ for various $j<i$. Since $k$ is constant it is possible to find, also in constant time, the subpaths of $P_{T^{i-1}}(\mathbf{x},\mathbf{y})$ that lie in $T$. By taking the maximum of the longest edges of all these paths in $T$ we get $\ell^i(\mathbf{x},\mathbf{y})$.

The purpose of $J$, as defined in the algorithm, is to have a graph of constant structural complexity that contains a representative edge for every path of $T^{i-1}$ that lies wholly in $T$. By specifying the nodes and edges of $J$ in the manner of Algorithm $F$-MST we are assured that $P_{T^{i-1}}(\mathbf{x},\mathbf{y})$ corresponds to a path in $J$ connecting $\{\mathbf{x}\}$ and $\{\mathbf{y}\}$. Observe that any path $P_{ab}$ in $T^{i-1}$ connecting nodes $\mathbf{x}_a\in A^a$ and $\mathbf{x}_b\in A^b$ lies entirely in $T$ if and only if $H_{e',\mathbf{x}_a,\mathbf{x}_b}=\mbox{FALSE}$ for all $e'\in D^{i-1}$, where $D^{i-1}$ is the set of edges that have been deleted from $T$ up to and including Step $i-1$. Given an edge $e$ of $J$ we need to know the endpoints of the path in $T$ represented by $e$. For this we introduce the functions $\sigma_j^J(e)$, $j=1,2$ in Algorithm $F$-MST.

\begin{figure}[htb]
\begin{center}
\includegraphics[scale=0.4]{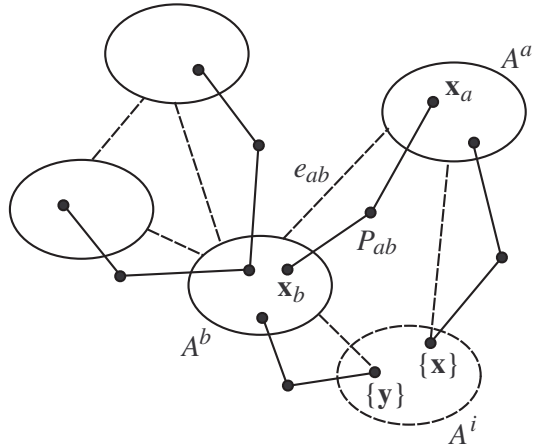}
\end{center}
\caption{An example of graph $J$ at Step $i$}\label{figEg1}
\end{figure}

Before formally proving the correctness of Algorithm $F$-MST we illustrate a few of the above concepts in Figure \ref{figEg1}. The solid-boundary ellipses and the sets $\{\mathbf{x}\},\{\mathbf{y}\}$ are nodes of $J$, the dashed lines are edges of $J$, and the solid lines and circles are edges and nodes of $T$. For $P_{ab}$ we have $\sigma_1^J(e_{ab})=\mathbf{x}_a$ and $\sigma_2^J(e_{ab})=\mathbf{x}_b$. Notice that $J$ is not necessarily a tree, and therefore it is not immediately clear that there will be a unique path in $J$ connecting $\{\mathbf{x}\}$ and $\{\mathbf{y}\}$. The next lemma settles this question. We assume by the inductive hypothesis that $T^{i-1}$ is a tree; the base case $i=1$ holds from Theorem \ref{connected}. Observe that for every path $P$ of $J$ connecting some $A^r$ and $A^d$ there exists a unique path $W(P)$ in $T^{i-1}$ connecting some pair of nodes $\mathbf{x}_r\in A^r$ and $\mathbf{x}_d\in A^d$.

\begin{figure}[htb]
\begin{center}
\includegraphics[scale=0.4]{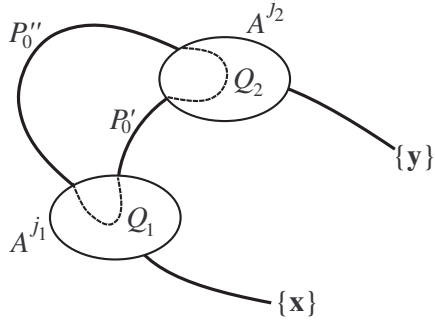}
\end{center}
\caption{Proof of Lemma \ref{JLem}}\label{figEg2}
\end{figure}

\begin{lemma}\label{JLem}$J$ contains at most one path connecting $\{\mathbf{x}\}$ and $\{\mathbf{y}\}$.
\end{lemma}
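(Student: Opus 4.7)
The approach is by contradiction. Suppose two distinct paths $P_1,P_2$ in $J$ both connect $\{\mathbf{x}\}$ and $\{\mathbf{y}\}$. By the observation preceding the lemma, each $P_\ell$ yields a walk $W(P_\ell)$ in $T^{i-1}$ from $\mathbf{x}$ to $\mathbf{y}$, obtained by concatenating, for each edge $e$ of $P_\ell$, the subpath $P_T(\sigma_1^J(e),\sigma_2^J(e))$ (which lies in $T\setminus D^{i-1}\subseteq T^{i-1}$ by the defining FALSE-condition on $H$), interleaved at each intermediate vertex $A^j$ of $P_\ell$ with the unique $F^j$-subpath joining the relevant representatives inside $A^j$.

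By the inductive hypothesis $T^{i-1}$ is a tree, so the simple path from $\mathbf{x}$ to $\mathbf{y}$ in $T^{i-1}$ is unique; call it $W$. The first part of the argument is to verify that each $W(P_\ell)$ is itself simple and therefore coincides with $W$. Simplicity should follow because each $T$-subpath and each $F^j$-subpath is simple (as a path in the respective tree $T$ or $F^j$), the components $F^j$ are pairwise vertex-disjoint, and the vertex classes $\{\mathbf{x}\},\{\mathbf{y}\},A^1,\ldots,A^{i-1}$ of $J$ are pairwise disjoint as subsets of $X$; consecutive subpaths then meet only at a single junction vertex, so no internal vertex of the concatenation is repeated. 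This forces $W(P_1)=W(P_2)=W$.

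The remaining step is to recover $P_\ell$ from $W$, which would give $P_1=P_2$. Since the $F^j$ are pairwise vertex-disjoint and interface with $T\setminus D^{i-1}$ only at their leaf-set $A^j$, the walk $W$ decomposes uniquely into maximal sub-walks lying either wholly in $T\setminus D^{i-1}$ or wholly in some $F^j$; the ordered list of $A^j$'s associated with the $F^j$-sub-walks then recovers the ordered list of intermediate vertices of any preimage path in $J$. I anticipate the main obstacle to be ruling out an alternative ``split'' of a $T$-sub-walk: if such a sub-walk incidentally passes through a terminal $\mathbf{z}\in A^{j'}$ for some $j'<i$, one must preclude its representation in $J$ as two $J$-edges meeting at $A^{j'}$ rather than a single $J$-edge. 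Addressing this will rely on the constraint that the chosen representatives $\sigma_1^J,\sigma_2^J$ of each $J$-edge satisfy $H_{e',\sigma_1^J,\sigma_2^J}=\textrm{FALSE}$ for every $e'\in D^{i-1}$, which controls exactly which $T$-paths are admissible as edges of $J$ and is intended to pin down the sequence of $A^j$'s appearing in any preimage path uniquely.
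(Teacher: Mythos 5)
Your proposal has two genuine gaps, and the second is where the whole content of the lemma lives. First, the simplicity of $W(P_\ell)$ does not follow from the observation that \emph{consecutive} subpaths meet only at a single junction vertex: two non-consecutive $T$-subpaths of the concatenation are paths in the tree $T$ between \emph{different} pairs of representatives, and these can share internal vertices. Indeed, if the representatives $\sigma_2^J,\sigma_1^J$ chosen at an intermediate node $A^j$ for its two incident $J$-edges coincide, the $F^j$-subpath is empty and the concatenation may walk out to that common representative and straight back through the same vertices of $T$. So you cannot yet conclude $W(P_1)=W(P_2)=W$; excluding such overlaps already requires extracting a cycle of $T^{i-1}$ from the offending configuration, which is precisely the argument you have deferred. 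Second, the inversion step --- recovering $P_\ell$ uniquely from $W$ --- is exactly the obstacle you name and then leave unresolved (``addressing this will rely on\ldots''); as written, the proposal establishes only the set-up, not the conclusion.

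The paper's proof is shorter and sidesteps both difficulties by working directly with the pair of distinct paths rather than with their images. It extracts subpaths $P_0'$ of $P'$ and $P_0''$ of $P''$ between two common nodes $A^{j_1}$ and $A^{j_2}$ chosen so that $P_0''$ shares no internal nodes with $P'$, maps these into $T^{i-1}$ via $W(\cdot)$, joins their endpoints inside $A^{j_1}$ and $A^{j_2}$ by the unique paths $Q_1$, $Q_2$ through $F^{j_1}$ and $F^{j_2}$ (empty if the endpoints agree), and observes that $Q_1\cup W(P_0'')\cup Q_2\cup W(P_0')$ is a cycle of $T^{i-1}$, contradicting the inductive hypothesis that $T^{i-1}$ is a tree. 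If you wish to retain your two-stage plan, this cycle construction is what you would need in order to prove both your simplicity claim and your injectivity claim; it is more economical to apply it once, to the lens between the two putative paths, as the paper does.
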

\begin{proof}
To get a contradiction let $P'$ and $P''$ be two distinct paths of $J$ connecting $\{\mathbf{x}\}$ and $\{\mathbf{y}\}$. Let $A^{j_1}$ and $A^{j_2}$ be distinct nodes of $P'$ such that the subpath $P_0''$ of $P''$ connecting $A^{j_1}$ and $A^{j_2}$ shares no internal nodes with $P'$; see Figure \ref{figEg2}. The pair $A^{j_1}$ and $A^{j_2}$ must exist since $P''$ is a proper path (i.e., no nodes are repeated in the $\{\mathbf{x}\}-\{\mathbf{y}\}$ walk through $P''$). Let $P_0'$ be the subpath of $P'$ connecting $A^{j_1}$ and $A^{j_2}$. For $c=1,2$ let $Q_c$ be the unique path through $F^{j_c}$ connecting the distinct endpoints of $W(P_0')$ and $W(P_0'')$ in $A^{j_c}$; if $W(P_0')$ and $W(P_0'')$ share the same endpoint in $A^{j_c}$ then $Q_c$ is the empty set. Clearly then $Q_1\cup W(P_0'')\cup Q_2\cup W(P_0')$ is a cycle of $T^{i-1}$, which contradicts the inductive hypothesis.
\end{proof}

\begin{corollary}\label{pathLem}Let $\mathbf{x},\mathbf{y}\in A^i$. Then $\ell^{\,i}(\mathbf{x},\mathbf{y})$, as defined in Algorithm $F$-MST, is the longest edge of $P_{T^{i-1}}(\mathbf{x},\mathbf{y})$, excluding any edges of $F$.
\end{corollary}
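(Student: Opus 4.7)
The plan is to argue by induction on $i$, exploiting the inductive structure of Algorithm $F$-MST and the inductive hypothesis that $T^{i-1}$ is a tree (the base case $i=1$ is covered by Theorem~\ref{connected}). I would first decompose the unique path $P_{T^{i-1}}(\mathbf{x},\mathbf{y})$ into maximal alternating segments of $T$-edges and $F^{j}$-edges for various $j<i$. Because the Steiner points of any $F^j$ are absent from $T$, every transition between the two edge types must occur at a terminal in $\{\mathbf{x}\}\cup\{\mathbf{y}\}\cup\bigcup_{j<i}A^j$. Writing the decomposition as $\pi_1,\phi_1,\pi_2,\ldots,\pi_m$, with the $\pi_r$'s the $T$-subpaths and endpoints $\mathbf{u}_{r-1},\mathbf{u}_r$, each $\pi_r$ equals $P_T(\mathbf{u}_{r-1},\mathbf{u}_r)$ and uses no edge of $D^{i-1}$. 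Hence $(\mathbf{u}_{r-1},\mathbf{u}_r)$ witnesses an edge of $J$, and the sequence of these edges traces a walk in $J$ from $\{\mathbf{x}\}$ to $\{\mathbf{y}\}$.

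The second step is to check that this walk is a simple path: if any component $A^{j}$ appeared twice in the sequence, then two distinct paths in $T^{i-1}$ would join some pair of nodes of $A^j$ (one through $F^j$, the other through the intervening $T$- and $F^k$-segments), contradicting that $T^{i-1}$ is a tree. By Lemma~\ref{JLem}, the simple path obtained must therefore coincide with $P^i$, yielding a bijection between the edges of $P^i$ and the subpaths $\pi_r$.

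The delicate step, and the main obstacle I anticipate, is showing that $\ell_T(\sigma_1^J(e_r),\sigma_2^J(e_r))=\ell_T(\mathbf{u}_{r-1},\mathbf{u}_r)$: the $\sigma$ functions commit to some valid witness pair for each edge of $J$, and this need not be the pair $(\mathbf{u}_{r-1},\mathbf{u}_r)$ produced by the decomposition. I would settle this by proving that the valid witness pair is actually unique. Suppose distinct pairs $(\mathbf{w}_1,\mathbf{w}'_1)$ and $(\mathbf{w}_2,\mathbf{w}'_2)$ both witnessed the same edge $(A^a,A^b)\in E(J)$. Concatenating $P_T(\mathbf{w}_1,\mathbf{w}'_1)$ and $P_T(\mathbf{w}_2,\mathbf{w}'_2)$ with the (possibly trivial) $F^a$-subpath joining $\mathbf{w}_1$ to $\mathbf{w}_2$ and the $F^b$-subpath joining $\mathbf{w}'_1$ to $\mathbf{w}'_2$ yields a closed walk in $T^{i-1}$. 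Since $F^a$-, $F^b$- and $T$-edges are pairwise disjoint, the only doubly-traversed edges of the walk are $T$-edges lying in the common prefix/suffix of the two $T$-paths; discarding these leaves a non-empty subgraph of $T^{i-1}$ in which every vertex has even degree, and so it must contain a cycle, contradicting the inductive hypothesis. The endpoint cases, in which $\{\mathbf{x}\}$ or $\{\mathbf{y}\}$ replaces one of the $A$-components, are handled identically with a trivial $F$-subpath at the singleton end.

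Finally, since each $\pi_r=P_T(\mathbf{u}_{r-1},\mathbf{u}_r)$ has longest edge $\ell_T(\mathbf{u}_{r-1},\mathbf{u}_r)$ by the very definition of $\ell_T$, and the $\pi_r$'s comprise exactly the $T$-edges of $P_{T^{i-1}}(\mathbf{x},\mathbf{y})$, taking the maximum over $r$ identifies $\ell^{\,i}(\mathbf{x},\mathbf{y})$ with the longest edge of $P_{T^{i-1}}(\mathbf{x},\mathbf{y})$ lying outside $F$, which is the content of the corollary.
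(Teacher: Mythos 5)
Your proof is correct and follows essentially the same route as the paper's: decompose $P_{T^{i-1}}(\mathbf{x},\mathbf{y})$ into alternating $T$- and $F^j$-subpaths, identify the induced walk in $J$ with the unique path $P^i$ via Lemma~\ref{JLem}, and take the maximum of the longest edges over the $T$-subpaths. The one genuine addition is your verification that each edge of $J$ admits a unique witness pair, so that $\ell_T(\sigma_1^J(e),\sigma_2^J(e))$ really is the longest edge of the corresponding $T$-subpath -- a point the paper's one-line proof leaves implicit -- but this fills a gap in the same argument rather than taking a different route.
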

\begin{proof}
By the previous lemma there is a unique path $P^i=P_J(\{\mathbf{x}\},\{\mathbf{y}\})$ for Algorithm $F$-MST to find. The required longest edge on this path is the maximum of the longest edges for each subpath of $W(P^i)$ containing edges of $T$ only. The result follows.
\end{proof}

We now prove the main result of this section. The theorem implies that even in the case when $F$ is disconnected, our update method, as described in Algorithm $F$-MST, produces an optimal $F$-fixed spanning tree in constant time.

\begin{theorem}\label{notconnected}Let $T$ be a minimum spanning tree on $X$, and assume that preprocessing steps PP1 and PP2 have been performed. If $F$
is a viable forest then Algorithm $F$-MST correctly produces a minimum $F$-fixed spanning tree $T_F$ in at most $O(k^{2k+3}k!)$ time.
\end{theorem}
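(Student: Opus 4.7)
The plan is to prove the theorem by induction on $i$, the iteration index of Algorithm $F$-MST, with inductive hypothesis that after iteration $i$ the graph $T^i$ is a minimum $(F^1\cup\cdots\cup F^i)$-fixed spanning tree on $X\cup S^1\cup\cdots\cup S^i$, where $S^j$ denotes the Steiner-point set of component $F^j$. The base case $i=1$ reduces directly to Theorem~\ref{connected}: with $D^0=\emptyset$, the graph $J$ collapses so that every candidate edge $(\{\mathbf{x}\},\{\mathbf{y}\})$ lies in $E(J)$, the path $P^1$ is a single edge, and $\ell^1(\mathbf{x},\mathbf{y})=\ell_T(\mathbf{x},\mathbf{y})$; therefore $L^1$ coincides exactly with the deletion set used in Theorem~\ref{connected}.

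For the inductive step, first check that adding $F^i$ to $T^{i-1}$ increases the cyclomatic number by exactly $|A^i|-1$, since $F^i$ contributes $|V(F^i)|-1$ edges and $|S^i|$ new vertices while meeting $T^{i-1}$ only at $A^i$. Every cycle in $G^i=T^{i-1}\cup F^i$ takes the form $P_{F^i}(\mathbf{x},\mathbf{y})\cup P_{T^{i-1}}(\mathbf{x},\mathbf{y})$ for some $\mathbf{x},\mathbf{y}\in A^i$. Because every $F$-edge must be retained by the fixed-topology constraint, the only candidates for removal are the $T$-edges appearing on $P_{T^{i-1}}(\mathbf{x},\mathbf{y})$, and by Corollary~\ref{pathLem} the longest such edge is exactly $\ell^i(\mathbf{x},\mathbf{y})$. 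Deleting $L^i$ therefore removes one edge per independent cycle and yields a spanning tree $T^i$. Minimality follows by the weighted-MST reduction from Theorem~\ref{connected}: weight the complete graph on $X\cup S^1\cup\cdots\cup S^i$ so that $F$-edges have weight zero and other edges have their usual norm-lengths; then verify via Theorem~\ref{kortee} that in $T^i$ every non-tree edge has weight at least the maximum weight on the corresponding fundamental path. This characterises $T^i$ as a minimum spanning tree of this weighted graph and hence as a minimum $(F^1\cup\cdots\cup F^i)$-fixed spanning tree.

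For complexity, the outer loop runs $t\le k$ times and Step~1a is executed on $O(|A^i|^2)=O(k^2)$ pairs per iteration. Constructing $J$ requires, for each of $O(k^2)$ potential edges $(U,U')$, testing up to $O(k^2)$ witness pairs against the $O(k)$ elements of $D^{i-1}$; the search in $J$ for the unique $\{\mathbf{x}\}$-$\{\mathbf{y}\}$ path (unique by Lemma~\ref{JLem}) may in the worst case enumerate up to $O(k!)$ candidate vertex sequences through the $O(k)$ supernodes of $J$, each of length $O(k)$; and the longest-$\ell_T$-edge on this path is extracted in $O(k)$ time. Combining these factors produces the claimed bound $O(k^{2k+3}k!)$, which is $O(1)$ because $k$ is constant.

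The main obstacle is the minimality step of the induction: one must ensure that $\ell^i(\mathbf{x},\mathbf{y})$ is not merely \emph{some} removable edge that restores a spanning tree, but is the optimally chosen one, given that $P_{T^{i-1}}(\mathbf{x},\mathbf{y})$ now alternates between $T$-segments and $F^j$-segments for various $j<i$ and that certain $T$-edges have already been removed into $D^{i-1}$. The book-keeping encoded in $J$, combined with Lemma~\ref{JLem} and Corollary~\ref{pathLem}, resolves precisely this difficulty, after which the Theorem~\ref{kortee}-based minimality argument from Theorem~\ref{connected} transfers verbatim.
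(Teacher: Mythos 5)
Your proposal is correct and follows essentially the same route as the paper's proof: induction on the number of components with Theorem~\ref{connected} as the base case, Corollary~\ref{pathLem} to justify that $\ell^{\,i}(\mathbf{x},\mathbf{y})$ is the right edge to delete, and a weighted-complete-graph reduction via Theorem~\ref{kortee} for minimality, together with the same per-line complexity accounting. The only cosmetic difference is that you place the zero-weight edges on $F$ itself in a graph over $X\cup S$ rather than on all pairs of $A$ in a graph over $X$, which does not change the argument.
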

\begin{proof}
The proposition is verified by using induction on $t$ (the number of connected components of $F$). Theorem \ref{connected} proves the base case: $T^1$ is connected, acyclic, and a minimum $F^1$-fixed spanning tree. Similar reasoning is used to prove that each subsequent $T^i$ is connected and acyclic. At each inductive step, Corollary \ref{pathLem}
assures us that Algorithm $F$-MST correctly deletes the longest edge (excluding edges of $F$) of any new cycle formed. Let $\mathbf{F}_i=\displaystyle\bigcup_{j\leq i}F^j$. To prove minimality of
$T^i$ we once again construct (analogously to Theorem \ref{connected}) a weighted complete graph $K$ on $X$ and a tree $T_K$, such that
$T^i$ is a minimum $\mathbf{F}_i$-fixed spanning tree on $X\cup S$ if and only if $T_K$ is a minimum spanning tree of $K$. Theorem \ref{kortee} then
completes the minimality proof.

To verify the time complexity first note that $t\leq k$ and $\vert A^i\vert\leq 6k$. Line 1a of the algorithm requires $O(k^2)$ time,
Line 1a(i) requires at most $O(k^{2k}k!)$ time and Line 1a(iii) requires $O(k)$ time.
\end{proof}

\section{The Main Algorithm}\label{algorithm5}

\begin{table}[h!b!p!]
\begin{center}
\begin{tabular}{|l|l|l|}
\hline
\multicolumn{3}{|c|}{\textbf{Algorithm $k$-GSMT}} \\
\hline
\multicolumn{3}{|c|}{\textbf{Input:} A set $X$ of $n$ points in the plane, a unit ball $B$, a positive integer $k$,$\hspace{0.2cm}$} \\
\multicolumn{3}{|c|}{and a symmetric $\ell_1$-optimisable function $\alpha. \hspace{2.9cm}$} \\
\multicolumn{3}{|c|}{} \\
\multicolumn{3}{|c|}{\textbf{Output:} A set $S$ of at most $k$ Steiner points, and a tree $T^*$ interconnecting $\hspace{0.3cm}$} \\
\multicolumn{3}{|c|}{$X\cup S$, such that
$\| T^*\|_\alpha={\displaystyle \min_{\mathcal{T},S^\prime}\alpha(\mathbf{e}_{\mathcal{T},X,S^\prime})}.\hspace{2.9cm}$} \\
\hline
\textbf{Step} & \textbf{Description} & \textbf{Time} \\
\hline
$\ $ & $\ $ & $\ $ \\
1 & Construct the OODC partition of $X$. & $O(n\log n)$ \\
2 & Construct a minimum spanning tree $T$ on $X$. & $O(n\log n)$ \\
3 & Perform preprocessing steps PP1 and PP2 on $T$. & $O(n^3)$ \\
$\ $ & $\ $ & $\ $ \\
4 & \textbf{For} every $k^\prime\leq k$ and each choice (with repetition) & $O(n^{2k})$\\
$\ $ & of $k^\prime$ regions, $R_1,...,R_{k^\prime}$, of the OODC partition \textbf{Do} & $\ $ \\
$\ $ & \textbf{Begin} & $\ $ \\
4a(i) & $\hspace{1cm}$Associate the free Steiner point $s_i$ with region $R_i$. & $\ $\\
4a(ii) & $\hspace{1cm}$Let $\mathcal{G}$ be the graph consisting of the vertices & $\ $\\
$\ $ & $\hspace{1cm}\bigcup C_X(R_i)\cup \{s_1,...,s_{k^\prime}\}$, all edges $(s_i,s_j), i \neq j$, and & $\ $ \\
$\ $ & $\hspace{1cm}$all edges $(s_i,\mathbf{x})$ for every $\mathbf{x} \in C_X(R_i)$. & $\ $\\
4a(iii) & $\hspace{1cm}$Let $\mathcal{G}^*$ be the set of all viable subforests of $\mathcal{G}$. & $\ $\\
$\ $ & $\ $ & $\ $ \\
4b & $\hspace{1cm}$\textbf{For} each $\mathcal{F}\in\mathcal{G}^*$ \textbf{Do} & $O(f_1(k))$\\
$\ $ & $\hspace{1cm}$\textbf{Begin} & $\ $\\
4b(i) & $\hspace{1.5cm}$Solve the fixed topology generalised Steiner & $O(f_2(k))$ \\
$\ $ & $\hspace{1.5cm}$tree problem for $\mathcal{F}$ to get the forest $F$. & $\ $ \\
4b(ii) & $\hspace{1.5cm}$Run Algorithm $F$-MST with input $T$ and $F$, & $O(k^{2k+3}k!)$ \\
$\ $ & $\hspace{1.5cm}$and let $T_F$ be its output.& $\ $ \\
$\ $ & $\hspace{1cm}$\textbf{End} & $\ $\\
$\ $ & \textbf{End} & $\ $ \\
5 & Select a smallest total cost $T_F$ produced and let $T^*=T_F$. & $\ $\\
$\ $ & Let $S$ be the set of Steiner points of $T^*$. & $\ $\\
\hline
\end{tabular}
\end{center}
\caption{Algorithm $k$-GSMT}\label{table2}
\end{table}

We present \textit{Algorithm $k$-GSMT}, in Table \ref{table2}. As stated in Section 1, the algorithm contains three main phases for a given iteration. Lines 4a(i)-4a(iii) represent the first phase, namely the construction of a feasible internal topology. The set $\mathcal{G}^*$ contains all feasible internal topologies as specified by a given choice of regions of the OODC partition. Line 4b(i) performs the second phase by solving the fixed topology problem for the current feasible internal topology. Line 4b(ii) executes the minimum spanning tree update process, which is the final phase for the given iteration. To prove correctness of Algorithm $k$-GSMT we first need some definitions and two observations.

Let $T_{\mathrm{opt}}$ be a generalised $k$-Steiner minimum tree on $X$. Let $S$ be the set
of Steiner points in $T_{\mathrm{opt}}$ and let $F_{\mathrm{opt}}$ be the subforest of $T_{\mathrm{opt}}$ induced by the edges of
$T_{\mathrm{opt}}$ incident with elements of $S$. Let $F_{\mathrm{opt}}^i$ be a connected component of $F_{\mathrm{opt}}$ with $k_i$ Steiner
points and terminal set $A^i \subseteq X$. Note that, like $T_{\mathrm{opt}}$, $F_{\mathrm{opt}}$ may not be unique for a given set $X$.

\begin{observation}\label{Fiopt}$F_{\mathrm{opt}}^i$ is a generalised $k_i$-Steiner minimum tree on $A^i$.
\end{observation}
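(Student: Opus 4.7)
The plan is a swap argument by contradiction. Suppose $F_{\mathrm{opt}}^i$ is not a generalised $k_i$-Steiner minimum tree on $A^i$, and let $F^*$ be one, spanning $A^i\cup S^*$ with $|S^*|\leq k_i$ and $\|F^*\|_\alpha<\|F_{\mathrm{opt}}^i\|_\alpha$. I would then build a candidate tree $T^*$ on $X$ by replacing the $F_{\mathrm{opt}}^i$ piece of $T_{\mathrm{opt}}$ with $F^*$: take the edge set $(E(T_{\mathrm{opt}})\setminus E(F_{\mathrm{opt}}^i))\cup E(F^*)$ on the node set $X\cup(S\setminus S^i)\cup S^*$. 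The aim is to show that $T^*$ is a valid generalised $k$-Steiner tree on $X$ whose cost under $\alpha$ is strictly less than $\|T_{\mathrm{opt}}\|_\alpha$, contradicting the optimality of $T_{\mathrm{opt}}$.

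First I would verify that $T^*$ is a tree. The key observation is that every edge of $T_{\mathrm{opt}}$ with exactly one endpoint in $V(F_{\mathrm{opt}}^i)=A^i\cup S^i$ must be a terminal-terminal edge whose endpoint inside $V(F_{\mathrm{opt}}^i)$ lies in $A^i$: any Steiner-point endpoint would place the edge into $F_{\mathrm{opt}}$, and a cross-component edge inside $F_{\mathrm{opt}}$ would contradict the fact that $F_{\mathrm{opt}}^i$ is a connected component of $F_{\mathrm{opt}}$. Consequently, contracting $F_{\mathrm{opt}}^i$ inside $T_{\mathrm{opt}}$ yields a tree in which every dangling edge attaches to a leaf in $A^i\subseteq V(F^*)$, and re-expanding the contraction using $F^*$ preserves connectivity; an edge count then confirms that $T^*$ is acyclic and hence a spanning tree of its node set. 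The Steiner-point count of $T^*$ is $|S|-k_i+|S^*|\leq|S|\leq k$, so $T^*$ is feasible.

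The main obstacle is the cost comparison, since $\alpha$ is only required to be symmetric and $\ell_1$-optimisable and need not decompose over edge subsets. The multisets of edge lengths of $T^*$ and $T_{\mathrm{opt}}$ agree on $E(T_{\mathrm{opt}})\setminus E(F_{\mathrm{opt}}^i)$ and differ only in the $F_{\mathrm{opt}}^i$-versus-$F^*$ piece, so the comparison reduces to the hypothesis $\|F^*\|_\alpha<\|F_{\mathrm{opt}}^i\|_\alpha$. For the representative cost functions this passes through directly: for $\alpha_p$ the cost is additive over disjoint edge sets, so $\|T^*\|_{\alpha_p}-\|T_{\mathrm{opt}}\|_{\alpha_p}=\|F^*\|_{\alpha_p}-\|F_{\mathrm{opt}}^i\|_{\alpha_p}<0$; for $\alpha_\infty$ we get $\|T^*\|_{\alpha_\infty}=\max(\|T_{\mathrm{opt}}\setminus F_{\mathrm{opt}}^i\|_{\alpha_\infty},\|F^*\|_{\alpha_\infty})\leq\|T_{\mathrm{opt}}\|_{\alpha_\infty}$, and $\ell_1$-optimisability of $\alpha$ is then used to re-select $T_{\mathrm{opt}}$ among equal-cost optima to one in which each $F_{\mathrm{opt}}^i$ already solves its subproblem, converting the weak inequality into the required contradiction. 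I expect this cost-comparison step to be the real technical burden, since a fully general proof would need to pin down precisely what monotonicity of $\alpha$ (beyond $\ell_1$-optimisability) is being exploited.
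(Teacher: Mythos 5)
The paper offers no proof of this Observation at all --- it is simply asserted --- so there is nothing of the authors' to compare your argument against; judged on its own terms, the structural half of your proposal is sound. Your key point, that every edge of $T_{\mathrm{opt}}$ with exactly one endpoint in $V(F_{\mathrm{opt}}^i)$ must meet that set at a terminal of $A^i$ (an edge meeting a Steiner point of $S^i$ would itself be an edge of the component $F_{\mathrm{opt}}^i$), is correct, and the contraction/re-expansion plus edge count does show $T^*$ is a spanning tree with at most $k$ Steiner points. The only small omission there is that $F^*$, being an arbitrary generalised $k_i$-Steiner minimum tree on $A^i$ rather than a viable forest, may contain a terminal--terminal edge already present in $E(T_{\mathrm{opt}})\setminus E(F_{\mathrm{opt}}^i)$, creating a duplicate edge that must be handled.

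The gap you flag in the cost comparison, however, is not merely a technical burden: it cannot be closed, because the Observation as literally stated is false for $\alpha_\infty$. Take four collinear terminals at $(0,0),(90,0),(100,0),(160,0)$ with $k=2$ in the Euclidean plane. The optimal bottleneck value is $45$, forced by the first gap; placing $s_1=(45,0)$ and $s_2=(120,0)$ yields a path with edge lengths $45,45,10,20,40$, which is a generalised $2$-Steiner minimum tree (and a minimum spanning tree on its nodes). Its second component, on $A^2=\{(100,0),(160,0)\}$, has bottleneck $40$, whereas the generalised $1$-Steiner minimum bottleneck tree on $A^2$ has bottleneck $30$. So no proof of the literal statement exists for general symmetric $\ell_1$-optimisable $\alpha$; what is true, and what Observation~\ref{Fiswap} and the correctness proof of Algorithm $k$-GSMT actually require, is the existential version: \emph{some} generalised $k$-Steiner minimum tree has every component $F_{\mathrm{opt}}^i$ locally optimal. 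That version is proved by exactly your swap construction together with an additional monotonicity assumption on $\alpha$ (replacing the sub-multiset of edge lengths of one component by a componentwise no-worse multiset does not increase the total cost), which holds for $\alpha_p$ and $\alpha_\infty$ but does not follow from symmetry and $\ell_1$-optimisability alone. Your instinct to re-select $T_{\mathrm{opt}}$ among equal-cost optima is precisely the right repair, but you should state explicitly that it replaces the claim ``$F_{\mathrm{opt}}^i$ is optimal'' by ``$T_{\mathrm{opt}}$ may be chosen so that each $F_{\mathrm{opt}}^i$ is optimal,'' and isolate the monotonicity property of $\alpha$ on which the repair depends.
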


\begin{observation}\label{Fiswap}Let $Y^i$ be any generalised $k_i$-Steiner minimum tree on $A^i$. Suppose
we transform $T_{\mathrm{opt}}$ to $T^{\,\prime}$ by replacing the subtree $F_{\mathrm{opt}}^i$ on $T$ by $Y^i$. Then $T^{\,\prime}$ is also a
generalised $k$-Steiner minimum tree on $X$.
\end{observation}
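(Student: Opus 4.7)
The plan is to prove $\alpha(T^{\,\prime}) = \alpha(T_{\mathrm{opt}})$, since once this equality is established, the conclusion follows from the fact that $T^{\,\prime}$ is a valid generalised $k$-Steiner tree on $X$. Its Steiner set $S^{\,\prime}$ consists of the Steiner points of $T_{\mathrm{opt}}$ lying outside $F_{\mathrm{opt}}^i$ together with those of $Y^i$, so $|S^{\,\prime}| \leq k$; and because $F_{\mathrm{opt}}^i$ and $Y^i$ both attach to the remainder of $T_{\mathrm{opt}}$ only via their shared set of terminal leaves $A^i$, the substitution preserves both connectivity and acyclicity. The direction $\alpha(T^{\,\prime}) \geq \alpha(T_{\mathrm{opt}})$ is then immediate from the global $\alpha$-optimality of $T_{\mathrm{opt}}$.

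For $\alpha(T^{\,\prime}) \leq \alpha(T_{\mathrm{opt}})$, I would begin with Observation~\ref{Fiopt}, which yields $\|F_{\mathrm{opt}}^i\|_\alpha = \|Y^i\|_\alpha$, and then invoke the $\ell_1$-optimisability hypothesis twice: globally, to assume without loss of generality that $T_{\mathrm{opt}}$ is a minimum spanning tree on $X\cup S$; and on the sub-problem over $A^i$, to assume $Y^i$ is a minimum spanning tree on $A^i$ together with its Steiner set. The pivotal step is then to argue that $T^{\,\prime}$ is itself a minimum spanning tree on $X\cup S^{\,\prime}$. I would verify this via Theorem~\ref{kortee} by checking, for each pair of nodes $u,v \in X\cup S^{\,\prime}$, that every edge on the $T^{\,\prime}$-path from $u$ to $v$ has length at most $\|u-v\|$. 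This splits naturally into three cases according to whether the path lies entirely inside $Y^i$, entirely outside $Y^i$, or straddles the interface at some terminal in $A^i$; the first two cases reduce directly to the MST properties of $Y^i$ and of $T_{\mathrm{opt}}$ respectively. Once $T^{\,\prime}$ is identified as an MST on $X\cup S^{\,\prime}$, a swap argument modelled on the proof of Corollary~\ref{MST-coroll}---invoking Lemma~\ref{MSTswap}, the symmetry of $\alpha$, and the equality $\|F_{\mathrm{opt}}^i\|_\alpha = \|Y^i\|_\alpha$---delivers the desired cost equality.

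The step I expect to be the main obstacle is the straddling case of the MST verification: for a $T^{\,\prime}$-path that crosses the interface at a terminal $a \in A^i$, one must simultaneously bound the edges on its interior segment through $Y^i$ and those on its exterior segment through the rest of $T_{\mathrm{opt}}$ by $\|u-v\|$, even though the Steiner points of $Y^i$ generally lie in different positions from those of $F_{\mathrm{opt}}^i$. This will require a careful decomposition of the path at its interface points, combined with a further application of Lemma~\ref{MSTswap} (or a triangle-inequality-style argument) that reconciles the local MST property of $Y^i$ at $a$ with the global MST property of $T_{\mathrm{opt}}$ at $a$.
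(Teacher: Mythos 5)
The paper offers no proof of this Observation at all --- it is stated as immediate --- so the only question is whether your proposal stands on its own. Its skeleton is right: $T^{\,\prime}$ is feasible (at most $k$ Steiner points, still a spanning tree), so $\alpha(\mathbf{e}_{T^{\,\prime}})\geq\alpha(\mathbf{e}_{T_{\mathrm{opt}}})$, and the reverse inequality should come from $\|Y^i\|_\alpha=\|F_{\mathrm{opt}}^i\|_\alpha$. But the route you take to the reverse inequality has genuine gaps. First, you ``invoke $\ell_1$-optimisability on the sub-problem'' to assume $Y^i$ is a minimum spanning tree on $A^i\cup S_{Y^i}$; the statement quantifies over \emph{any} generalised $k_i$-Steiner minimum tree $Y^i$, and Section~\ref{MST4} explicitly warns that optimal components need not be minimum spanning trees on their own nodes (e.g.\ for the bottleneck problem), so this reduction is not available. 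Second, the pivotal claim that $T^{\,\prime}$ is an MST on $X\cup S^{\,\prime}$ is not merely the ``main obstacle'' --- it is false in general: the Steiner points of $Y^i$ may land arbitrarily close to terminals of $X\setminus A^i$, violating the condition of Theorem~\ref{kortee} for pairs consisting of a new Steiner point and an outside terminal, pairs about which neither the MST property of $T_{\mathrm{opt}}$ nor that of $Y^i$ says anything. Third, even if that claim held, Lemma~\ref{MSTswap} transforms one spanning tree of a \emph{fixed} point set into another; $T^{\,\prime}$ spans $X\cup S^{\,\prime}$ while $T_{\mathrm{opt}}$ spans $X\cup S$, and these differ whenever the Steiner points of $Y^i$ differ from those of $F_{\mathrm{opt}}^i$, so the closing swap argument cannot be run.

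The entire MST detour is also unnecessary. A generalised $k$-Steiner minimum tree is, by definition, just a feasible tree of minimum $\alpha$-cost; nothing requires $T^{\,\prime}$ to be an MST on its nodes. The direct argument is that the edge-length vector of $T^{\,\prime}$ agrees with that of $T_{\mathrm{opt}}$ on all edges outside the swapped component, and the swapped blocks satisfy $\|Y^i\|_\alpha=\|F_{\mathrm{opt}}^i\|_\alpha$ by Observation~\ref{Fiopt}; for the cost functions in play ($\alpha_p$ is additive over edges, $\alpha_\infty$ is a maximum over edges) this immediately yields $\alpha(\mathbf{e}_{T^{\,\prime}})=\alpha(\mathbf{e}_{T_{\mathrm{opt}}})$, and combined with feasibility of $T^{\,\prime}$ the conclusion follows. (It is worth noting that for a completely arbitrary symmetric $\ell_1$-optimisable $\alpha$ this last step uses a separability of $\alpha$ over disjoint edge sets that the paper leaves implicit; that is a weakness of the Observation as stated, not something your MST machinery repairs.)
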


We can now prove correctness.

\begin{proposition}\label{correctness}
If $B$ and $\alpha$ satisfy Restrictions 1--4 then Algorithm $k$-GSMT constructs, in a time of $O(n^{2k})$, a tree $T^*$ that is a generalised $k$-Steiner minimum tree on the terminal set $X$.
\end{proposition}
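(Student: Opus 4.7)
The plan is to divide the argument into three pieces: show that Step~4 necessarily encounters the internal topology of some optimal tree; show that in the corresponding iteration the algorithm produces a tree of $\alpha$-cost at most the optimum; and sum the step-by-step running times from Table~\ref{table2}.

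For the first piece, fix any generalised $k$-Steiner minimum tree $T_{\mathrm{opt}}$ with Steiner set $S=\{\mathbf{s}_1,\ldots,\mathbf{s}_{k'}\}$, $k'\le k$, and by Corollary~\ref{MST-coroll} assume $T_{\mathrm{opt}}$ is a minimum spanning tree on $X\cup S$. Each $\mathbf{s}_i$ lies in a unique OODC region $R_i$. Iteratively applying the exchange argument from the proof of Lemma~\ref{cony} to one Steiner point at a time---treating the remaining Steiner points as additional fixed nodes of the ambient tree---produces another minimum spanning tree on $X\cup S$ with the same $\alpha$-cost in which, for each $i$, the terminal neighbours of $\mathbf{s}_i$ lie in $C_X(R_i)$. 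I expect the main obstacle to lie in justifying this iteration: each reapplication of Lemma~\ref{cony} must not unravel the cone-neighbour property already installed at earlier Steiner points. This is resolved by observing that each swap alters only an edge incident to the currently chosen Steiner point and preserves the MST property on $X\cup S$, so previously treated Steiner points retain their $C_X(R_i)$-neighbourhoods. Replacing $T_{\mathrm{opt}}$ by the resulting tree, its internal forest $F_{\mathrm{opt}}$ has a topology $\mathcal{F}_{\mathrm{opt}}$ that is a viable subforest of the graph $\mathcal{G}$ constructed in Steps~4a(ii)--4a(iii) for the tuple $(R_1,\ldots,R_{k'})$, so $\mathcal{F}_{\mathrm{opt}}\in\mathcal{G}^*$ in that iteration.

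For the second piece, consider that iteration. Step~4b(i) returns a forest $F$ with Steiner positions that minimize $\alpha$ on the forest edges, and by Theorem~\ref{notconnected}, Step~4b(ii) returns the minimum $F$-fixed spanning tree $T_F$. A minimum $F$-fixed spanning tree on $X\cup S$ consists of $F$ together with a minimum spanning tree on the graph obtained by contracting each connected component of $F$; since this contracted graph depends only on the partition of $X$ induced by the topology $\mathcal{F}_{\mathrm{opt}}$, the terminal-terminal completion $C$ is identical in $T_F$ and in $T_{\mathrm{opt}}$. Furthermore, $T_{\mathrm{opt}}$, being a minimum spanning tree on $X\cup S$ whose Steiner neighbourhoods match $F_{\mathrm{opt}}$, is itself a minimum $F_{\mathrm{opt}}$-fixed spanning tree, so $T_{\mathrm{opt}}=F_{\mathrm{opt}}\cup C$ while $T_F=F\cup C$. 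Thus $T_F$ and $T_{\mathrm{opt}}$ differ only on the forest edges, which are $\alpha$-minimized in $F$. For the standard cost functions $\alpha_p$ (additive) and $\alpha_\infty$ (maximum), this decomposition of $\alpha$ over the pair (forest, completion) immediately yields $\|T_F\|_\alpha\le\|T_{\mathrm{opt}}\|_\alpha$, and Step~5 therefore outputs a tree of cost exactly $\|T_{\mathrm{opt}}\|_\alpha$.

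The running time is obtained by summing the rightmost column of Table~\ref{table2}: preprocessing in Steps~1--3 costs $O(n^2)+O(n\log n)+O(n^3)=O(n^3)$, while Step~4 iterates over $\sum_{k'=1}^{k}O(n^{2k'})=O(n^{2k})$ tuples of OODC regions and performs, per iteration, a constant-size enumeration of viable subforests of the constant-size graph $\mathcal{G}$, a finite-time fixed-topology solve (Restriction~4), and a call to Algorithm $F$-MST of cost $O(k^{2k+3}k!)$ by Theorem~\ref{notconnected}---all constant in $n$. Step~4 therefore dominates, yielding overall running time $O(n^{2k})$.
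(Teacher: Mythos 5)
Your overall architecture is sound, and your first piece is actually \emph{more} careful than the paper's own proof: where the paper simply says ``by the properties of the OODC partition'' a forest with the right topology is encountered, you spell out the iterated application of the exchange argument of Lemma~\ref{cony} (one Steiner point at a time, with the other Steiner points treated as fixed nodes) and correctly observe that each swap touches no edge incident to a previously processed Steiner point, so the installed $C_X(R_i)$-neighbourhoods survive. Your second piece also takes a genuinely different route from the paper: you decompose $T_F=F\cup C$ and $T_{\mathrm{opt}}=F_{\mathrm{opt}}\cup C$ with an identical terminal--terminal completion $C$ and compare forest costs directly, whereas the paper argues per connected component, invoking Observation~\ref{Fiopt} to see that each $F^i_{\mathrm{opt}}$ is itself a generalised $k_i$-Steiner minimum tree on $A^i$, Equation~(\ref{eqFi}) to get \emph{equality} of component costs, Observation~\ref{Fiswap} to swap whole optimal subtrees, and Corollary~\ref{MST-coroll} to conclude --- the latter needing only the symmetry of $\alpha$ because it compares two minimum spanning trees on the \emph{same} node set, which by Lemma~\ref{MSTswap} have identical multisets of edge lengths.

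The genuine gap is in your final inequality. You write that ``for the standard cost functions $\alpha_p$ (additive) and $\alpha_\infty$ (maximum), this decomposition of $\alpha$ over the pair (forest, completion) immediately yields $\|T_F\|_\alpha\le\|T_{\mathrm{opt}}\|_\alpha$.'' But Proposition~\ref{correctness} is asserted for \emph{every} symmetric $\ell_1$-optimisable $\alpha$ satisfying Restrictions~1--4, and for a general symmetric function the implication ``$\alpha$ of the forest edge-vector decreases and the completion edge-vector is unchanged, hence $\alpha$ of the concatenated vector decreases'' simply does not follow: nothing in the hypotheses forces $\alpha$ to be monotone or separable across a partition of its arguments. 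The same objection applies to your implicit use of componentwise fixed-topology solves to minimise $\alpha$ over the whole forest. So as written you have proved the proposition only for $\alpha_p$ and $\alpha_\infty$, which is strictly weaker than the claim. To match the paper's generality you would need to route the argument through the equal-cost subtree replacement of Observation~\ref{Fiswap} together with Corollary~\ref{MST-coroll} (it is fair to note that Observation~\ref{Fiswap} is itself stated without proof and quietly carries the same burden for arbitrary $\alpha$; but the paper at least asserts it in full generality, while your proof makes the restriction explicit and therefore delivers less than the statement promises). The running-time accounting in your third piece is correct and matches Table~\ref{table2} and Theorem~\ref{notconnected}.
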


\begin{proof}
By the properties of the OODC partition, during the course of the algorithm a forest $F$ induced by edges incident with Steiner points is
constructed with connected components $F^i$ such that each $F^i$ has the same terminal set (i.e. $A^i$) and the same topology as
$F_{\mathrm{opt}}^i$, and therefore
\begin{equation}\label{eqFi}
   \| F^i \|_{\alpha}=\| F_{\mathrm{opt}}^i \|_{\alpha},
\end{equation}
where, recall, for any $T$ the symbol $\|T\|_\alpha$ denotes the cost of $T$ with respect to $\alpha$. We now consider two cases.

Suppose, for the first case, that $F_{\mathrm{opt}}=F$. Step~4b(ii) of the algorithm constructs a minimum $F$-fixed spanning tree $T_F$ on
$X\cup S$. Since $T_{\mathrm{opt}}$ is a minimum spanning tree on $X\cup S$ and contains $F$ as a subforest, it follows that $T_F$ is also a
minimum spanning tree on $X\cup S$. By Corollary ~\ref{MST-coroll}, $T^*=T_F$ is a generalised $k$-Steiner minimum tree on $X$, as required.

If, on the other hand, $F_{\mathrm{opt}}\not=F$, then there is a tree $T_F$ constructed in Step~4b(ii) of the algorithm that is the same as in
the previous paragraph, except each $F^i$ is replaced by $F_{\mathrm{opt}}^i$. By Equation~(\ref{eqFi}) and Observation~\ref{Fiswap} it again follows
that $T^*=T_F$ is a generalised $k$-Steiner minimum tree on $X$.

By using Cayley's formula and the observation that each spanning forest is a subgraph of a spanning tree which has $k-1$ edges, we get an upper bound for $f_1(k)$ of $126^k.\,k^{k-2}$ in Step 4b of the algorithm. The function $f_2$ in Step 4b(i) will depend on the relevant generalised $k$-Steiner tree problem and will be a function of $k$ only. Since $k$ is constant the overall time complexity of Algorithm $k$-GSMT is $O(n^{2k})$. Note that if $k=1$ or if the there is only one component, then the algorithm takes $O(n^2)$ time since we do not run PP2.
\end{proof}

\begin{theorem}{For any planar norm and symmetric $\ell_1$-optimisable cost function satisfying Restrictions 1-4 there exists a polynomial time algorithm with complexity $O(n^{2k})$ that solves the generalised $k$-Steiner tree problem for constant $k$.}
\end{theorem}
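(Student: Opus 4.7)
The plan is to invoke Algorithm $k$-GSMT from Section~\ref{algorithm5} and Proposition~\ref{correctness}, which together do essentially all of the work; the theorem is really a summary statement, and my task is to confirm that under Restrictions 1--4 every ingredient needed by Algorithm $k$-GSMT is in place.

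First I would go through the algorithm line by line and verify that each step is well-defined and meets its advertised time bound. Restrictions 1--3 guarantee, via Theorems~\ref{conditions} and \ref{thr8}, that Step 1 produces the OODC partition with $O(n^2)$ regions in $O(n^2)$ time. Step 2 needs only a minimum spanning tree in a normed plane, which exists and is computable in $O(n\log n)$ time. The preprocessing in Step 3 is combinatorial, independent of $B$ and $\alpha$, and runs in $O(n^3)$ time and space. Restriction 4 makes Step 4b(i) well-posed, and since $k$ is a constant this step runs in constant time. Step 4b(ii) is handled by Algorithm $F$-MST, which by Theorem~\ref{notconnected} terminates in $O(k^{2k+3}k!)$ time, again constant in $n$.

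Next I would argue correctness. The key observation is that, by Theorem~\ref{mainTh}, any optimal Steiner configuration places each Steiner point in some region of the OODC partition, and its neighbours in $T_{\mathrm{opt}}$ can be taken from the associated six-point set $C_X(R)$. Therefore the outer loop of Step 4 eventually considers a $k'$-tuple of regions matching $T_{\mathrm{opt}}$, and the inner enumeration of viable subforests in Step 4a(iii) contains the topology of $F_{\mathrm{opt}}$. Observations~\ref{Fiopt} and \ref{Fiswap} let us swap components of equal cost, while Corollary~\ref{MST-coroll} ensures that the $F$-fixed spanning tree produced in Step 4b(ii) is itself a generalised $k$-Steiner minimum tree. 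This is exactly the two-case argument of Proposition~\ref{correctness}.

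Finally I would count: the number of ways to choose $k' \leq k$ regions (with repetition) from the $O(n^2)$ regions of the OODC partition is $O(n^{2k})$, and the per-iteration work in Step 4 is bounded by a function of $k$ only. Hence the total running time is $O(n^{2k})$, which dominates the $O(n^3)$ preprocessing and completes the proof. The only obstacle is bookkeeping---checking that the various constant factors depending on $k$ do not hide any $n$-dependence---and no new idea is required beyond what has already been established in Sections~\ref{prelim1}--\ref{algorithm5}.
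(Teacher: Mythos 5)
Your proposal is correct and follows essentially the same route as the paper: the theorem is proved there as an immediate consequence of Proposition~\ref{correctness}, whose two-case argument (via Theorem~\ref{mainTh}, Observations~\ref{Fiopt} and~\ref{Fiswap}, and Corollary~\ref{MST-coroll}) and iteration count over the $O(n^{2k})$ choices of regions you reproduce faithfully. The step-by-step verification of the time bounds under Restrictions 1--4 likewise matches the paper's accounting, so no new content is needed.
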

\section{Conclusion}
The outcome of this paper is a generalisation, on multiple fronts, of Georgakopoulos and Papadimitriou's $O(n^2)$ solution to the
$1$-Steiner tree problem. By utilising abstract Voronoi diagrams, we build on their complexity-reducing concept of oriented Dirichlet cell
partitions. The result is a broadening of the scope of these partitions to include terminal sets in a larger class of normed planes. A bigger challenge
in our research was to construct a generalisation to $k$ Steiner points. We achieve this by producing a novel method of updating a minimum
spanning tree to include a fixed subtree. A two-part preprocessing stage allows this to be done in constant time with respect to the total
number of terminals. One of the key observations of our research was that the main algorithm from \cite{georg} basically pertains to any
``Steiner-like" problem with cost function $\alpha$, as long as it is guaranteed that a solution exists which is optimal with respect to
$\alpha$ and is also a minimum spanning tree on its complete set of nodes. This fact allows us to accommodate the class of generalised
$k$-Steiner tree problems with symmetric $\ell_1$-optimisable cost functions. The result is an $O(n^{2k})$-time
solution to this class of problems.

It may be possible to generalise our algorithm to higher dimensional spaces, at least in the Euclidean case. A natural starting point could be
Monma and Suri's paper \cite{monma}, where a partition of $d$-dimensional Euclidean space is constructed that has similar topology-limiting
properties as the oriented Dirichlet cell partition.


\begin{thebibliography}{l}

\bibitem{bae2}S.W. Bae, C. Lee and S. Choi, On exact solutions to the Euclidean bottleneck Steiner tree problem, Information Processing Letters 110 (2010) 672--678.
\bibitem{bae1}S.W. Bae, S. Choi, C. Lee and S. Tanigawa, Exact algorithms for the bottleneck Steiner tree problem, \textit{Algorithmica}, 61 (2011), pp. 924--947.
\bibitem{brazil}M. Brazil, M. Zachariasen, Steiner Trees for Fixed Orientation Metrics, \textit{Journal of Global Optimization}, 43 (2009),
pp. 141--169.
\bibitem{chew}L. P. Chew, R. L. Drysdale, III, Voronoi Diagrams Based on Convex Distance Functions, \textit{Proceedings 1st ACM Symposium
on Computational Geometry}, (1985), pp. 235--244.
\bibitem{drezner}Z. Drezner, G. O. Wesolowsky, A New method for the Mulfifacility Minimax Location Problem, \textit{The Journal of the Operational
Research Society}, 29 (1978), pp. 1095--1101.
\bibitem{ganley}J. L. Ganley, Geometric Interconnection and Placement Algorithms, Ph. D Thesis, Department of Computer Science, University of
Virginia, Charlottesville, VA, 1995.
\bibitem{ganley2}J. L. Ganley, J. S. Salowe, Optimal and Approximate Bottleneck Steiner trees, \textit{Operations Research Letters}, 19 (1996),
pp. 217--224.
\bibitem{georg}G. Georgakopoulos, C. H. Papadimitriou, The 1-Steiner Tree Problem, \textit{Journal of Algorithms}, 8 (1987), pp. 122--130.
\bibitem{bib13}E. N. Gilbert, H. O. Pollak, Steiner minimal trees, \textit{SIAM J. Appl. Math.}, 16 (1968), pp. 1--29.
\bibitem{griffith} J. Griffith, G. Robins, J. S. Salowe, T. Zhang, Closing the Gap: Near-Optimal Steiner Trees in Polynomial Time, \textit{
IEEE Transactions on Computer-Aided Design of Integrated Circuits and Systems}, 13, (1994), pp. 1351--1365.
\bibitem{bib14}F. K. Hwang, D. S. Richards, P. Winter, The Steiner Tree Problem. \textit{Annals of Discrete Mathematics} 53, Elsevier Science
Publishers B.V., Amsterdam, 1992.
\bibitem{hwang}F. K. Hwang, J. F. Weng, The Shortest Network under a Given Topology, \textit{Journal of Algorithms}, 13 (1992), pp. 468-488.
\bibitem{john}D. B. Johnson, P. Metaxas, Optimal algorithms for the vertex updating problem of a minimum spanning tree, \textit{Proceedings of
the Sixth International Parallel Processing Symposium}, March 1992, pp. 306--314.
\bibitem{kahng}A. B. Kahng, G. Robins, A New Class of Iterative Steiner tree Heuristics with Good Performance, \textit{IEEE Transactions on
Computer-aided Design}, 11 (1992), pp. 893--902.
\bibitem{korte}B. Korte, J. Vygen, Combinatorial Optimization: Theory and Algorithms, \textit{Algorithms and Combinatorics}, 21,
Springer-Verlag, 2008, pp. 120--121.
\bibitem{lee}J. Lee, A First Course in Combinatorial Optimization, \textit{Cambridge Texts in Applied Mathematics}, Cambridge University Press,
2004.
\bibitem{lin}G.--H. Lin, A. P. Thurber, G. Xue, The $1$-Steiner Tree Problem in Lambda-3 Geometry Plane, \textit{Proceedings of the 1999 IEEE
International Symposium on Circuits and Systems}, 6 (July 1999), pp. 125--128.
\bibitem{love}R. F. Love, G. O. Wesolowsky, S. A. Kraemer, A Multifacility Minimax Location Method for Euclidean Distances, \textit{International
Journal of Production Research}, 11 (1973), pp. 37--45.
\bibitem{monma}C. Monma, S. Suri, Transitions in Geometric Minimum Spanning Trees, \textit{Discrete and Computational Geometry}, 8 (1992), pp.
265--293.
\bibitem{rubin}J. H. Rubinstein, D. A. Thomas, J. F. Weng, Degree-Five Steiner Points Cannot Reduce Network Costs for Planar Sets,
\textit{Networks}, 22 (1992), pp. 531--537.
\bibitem{bib3}M. Sarrafzadeh, C.K. Wong, Bottleneck Steiner trees in the plane, \textit{IEEE Trans. Comput}., 41 (1992), pp. 370--374.
\bibitem{shamos}M. I. Shamos and D. Hoey, Closest-point problems, \textit{Proceedings of the 16-th Annual Symposium on Foundations of Computer
Science}, (1975), pp. 151--162.
\bibitem{warme}Warme, D.M., Winter, P., Zachariasen, M.: Exact Algorithms for Steiner Tree Problems: A Computational Study. In: Du, D., Smith,
J.M., Rubinstein, J.H. (eds.) Advances in Steiner trees, pp. 81--116. Kluwer Academic Publishers, Netherlands (2000)

\end{thebibliography}
\end{document}